\providecommand{\U}[1]{\protect\rule{.1in}{.1in}}
\providecommand{\U}[1]{\protect\rule{.1in}{.1in}}
\newtheorem{theorem}{Theorem}[section]
\newtheorem{proposition}{Proposition}[section]
\newtheorem{lemma}{Lemma}[section]
\renewcommand{\@biblabel}[1]{}
\begin{document}

\begin{center}
{\Large \textbf{Statistical estimate of the proportional hazard premium of
loss under random censoring}}\medskip\medskip

{\large Louiza Soltane, Djamel Meraghni, Abdelhakim Necir}$^{\ast}$\medskip

{\small \textit{Laboratory of Applied Mathematics, Mohamed Khider University,
Biskra, Algeria}}\bigskip\medskip
\end{center}

\noindent\textbf{Abstract}\smallskip

\noindent Many insurance premium principles are defined and various estimation
procedures introduced in the literature. In this paper, we focus on the
estimation of the excess-of-loss reinsurance premium when the risks are
randomly right-censored. The asymptotic normality of the proposed estimator is
established under suitable conditions and its performance evaluated through
sets of simulated data.\medskip

\noindent\textbf{Keywords:} Heavy tails; Hill estimator; Kaplan-Meier
estimator; Proportional hazard premium; Random censoring; Reinsurance
treaty.\medskip

\noindent\textbf{AMS 2010 Subject Classification }62G32, 62N01, 91B30, 62P05

\vfill

\noindent{\small $^{\text{*}}$Corresponding author:
\texttt{necirabdelhakim@yahoo.fr} \newline\noindent\textit{E-mail
addresses:}\newline\texttt{louiza\_stat@yahoo.com} (L.~Soltane)\newline%
\texttt{djmeraghni@yahoo.com} (D.~Meraghni)}

\section{\textbf{Introduction\label{sec1}}}

\noindent Let $X_{1},...,X_{n}$ be $n\geq1$ independent copies of a
non-negative random variable (rv) $X,$ defined over some probability space
$(\Omega,\mathcal{A},\mathbb{P)},$ with continuous cumulative distribution
function (cdf) $F.$ An independent sequence of independent rv's $Y_{1}%
,...,Y_{n}$ with continuous cdf $G$ censor them to the right, so that at each
stage $j$ we only can observe $Z_{j}:=\min(X_{j},Y_{j})$ and the variable
$\delta_{j}:=\mathbf{1}\left\{  X_{j}\leq Y_{j}\right\}  $ (with
$\mathbf{1}\left\{  \cdot\right\}  $ denoting the indicator function)
informing whether or not there has been censorship. This model is very useful
in a variety of areas where random censoring is very likely to occur such as
in biostatistics, medical research, reliability analysis, actuarial
science,... For more general censoring schemes and other issues involving
censored data, we refer, for instance, to \cite{CO84}, \cite{KP80} and
\cite{Gill80}.\smallskip

\noindent In insurance, the worst scenarios are those caused by extreme events
such as natural catastrophes, human-made disasters and financial crashes.
These events increase the bill of insurance and reinsurance companies. A
typical requirement for actuaries is the determination of adequate premiums
for such risks. Usually, the insurer's claims data do not correspond to the
underlying losses, because they are censored from above, since the insurer
stipulates an upper limit to the amount to be paid out and the reinsurer
covers the excess over this fixed threshold. This kind of reinsurance is
called excess-of-loss reinsurance (see, e.g., \citeauthor{RSHSVT99},
\citeyear{RSHSVT99}; \citeauthor{EKM97}, \citeyear{EKM97}) and the upper limit
has distinct designations that are specific to each insurance type. For
instance, in life insurance, it is called the cedent's company retention level
while in non-life insurance, it is called the deductible, where the losses
should be treated separately. For a discussion on the occurrence of
right-random censorship in the area of insurance, one refers to \cite{DPVK06}
in which a study on the allocated loss adjustment expenses (ALAE's) is
given.\smallskip

\noindent Let us assume that both $F$ and $G$ are heavy-tailed, that is there
exist two constants $\gamma_{1}>0$ and $\gamma_{2}>0,$ called tail indices or
extreme value indices (EVI's), such that%
\begin{equation}
\overline{F}(z)\sim z^{-1/\gamma_{1}}\ell_{1}(z)\text{ and }\overline
{G}(z)\sim z^{-1/\gamma_{2}}\ell_{2}(z),\text{ as }z\rightarrow\infty,
\label{VR}%
\end{equation}
where $\ell_{1}$ and $\ell_{2}$ are slowly varying functions at infinity, i.e.
$\lim_{z\rightarrow\infty}\ell_{i}(xz)/\ell_{i}(z)=1$ for every $x>0,$
$i=1,2.$ Throughout the paper, we use the notation $\overline{\mathcal{S}%
}(x):=\mathcal{S}(\infty)-\mathcal{S}(x),$ for any function $\mathcal{S}%
\left(  x\right)  $ of $x>0.$ If relations $\left(  \ref{VR}\right)  $ hold,
then we have, for any $x>0$%
\begin{equation}
\underset{z\rightarrow\infty}{\lim}\frac{\overline{F}(xz)}{\overline{F}%
(z)}=x^{-1/\gamma_{1}}\text{ and }\underset{z\rightarrow\infty}{\lim}%
\frac{\overline{G}(xz)}{\overline{G}(z)}=x^{-1/\gamma_{2}}, \label{Condition}%
\end{equation}
and we say that $\overline{F}$ and $\overline{G}$ are regularly varying at
infinity as well, with respective tail indices $-1/\gamma_{1}$ and
$-1/\gamma_{2}$, which we denote by $\overline{F}\in\mathcal{RV}%
_{-1/\gamma_{1}}$ and $\overline{G}\in\mathcal{RV}_{-1/\gamma_{2}}$. Note
that, in virtue of the independence of $X$ and $Y,$ the cdf of the observed
$Z^{\prime}s,$ that we denote by $H,$ is also heavy-tailed and we have
$H\in\mathcal{RV}_{-1/\gamma}$ with $\gamma:=\gamma_{1}\gamma_{2}/(\gamma
_{1}+\gamma_{2}).$ This class of distributions, which includes models such as
Pareto, Burr, Fr\'{e}chet, L\'{e}vy-stable and log-gamma, plays a prominent
role in extreme value theory. Also known as Pareto-type or Pareto-like
distributions, these models have important practical applications and are used
rather systematically in certain branches of non-life insurance as well as in
finance, telecommunications, geology and many other fields (see e.g.
\citeauthor{R07}, \citeyear{R07}). The analysis of extreme values of randomly
censored data is a new research topic to which \cite{RT97} made a very brief
reference, in Section 6.1, as a first step but with no asymptotic results. In
the last decade, several authors started to be interested in the estimation of
the tail index along with large quantiles under random censoring as one can
see in \cite{GO03}, \cite{BGDF07}, \cite{EFG08} and \cite{WW14}. \cite{GN11}
also made a contribution to this field by providing a detailed simulation
study and applying the estimation procedures on some survival data sets. Let
$\left\{  \left(  Z_{i},\delta_{i}\right)  ,\text{ }1\leq i\leq n\right\}  $
be a sample from the couple of rv's $\left(  Z,\delta\right)  $ and
$Z_{1:n}\leq...\leq Z_{n:n}$\ the order statistics pertaining to $\left(
Z_{1},...,Z_{n}\right)  .$ If we denote the concomitant of the $i$th order
statistic by $\delta_{\left[  i:n\right]  }$ (i.e. $\delta_{\left[
i:n\right]  }=\delta_{j}$ if $Z_{i:n}=Z_{j}),$\ then Hill's estimator of
$\gamma_{1}$ adapted to censored data is defined as $\widehat{\gamma}%
_{1}^{(H,c)}:=\widehat{\gamma}^{H}/\widehat{p},$ where $\widehat{\gamma}%
^{H}:=k^{-1}\sum_{i=1}^{k}\log\left(  Z_{n-i+1:n}/Z_{n-k:n}\right)  $
represents Hill's estimator (\citeauthor{Hill75}, \citeyear{Hill75}) of
$\gamma,$ with $k=k_{n}$ being an integer sequence satisfying%
\begin{equation}
1<k<n,\text{ }k\rightarrow\infty\text{ and }k/n\rightarrow0\text{ as
}n\rightarrow\infty, \label{k}%
\end{equation}
and $\widehat{p}:=k^{-1}\sum_{i=1}^{k}\delta_{\left[  n-i+1:n\right]  }$ being
the proportion of upper non-censored observations. \cite{EFG08} established
the asymptotic normality of\ $\widehat{\gamma}_{1}^{(H,c)}$ by assuming that
cdf's are absolutely continuous. Recently, \cite{BMN15} proved that
$\widehat{p}$ consistently estimates $p:=\gamma_{2}/(\gamma_{1}+\gamma_{2})$
leading to the consistency of $\widehat{\gamma}_{1}^{(H,c)}.$ They also
established the asymptotic normality of $\widehat{\gamma}_{1}^{(H,c)}$ by
adopting an approach that is different from that of \cite{EFG08}.\smallskip

\noindent In the excess-of-loss reinsurance treaty, the ceding company covers
claims which do not exceed a (high) number $R\geq0$ (called retention level),
while the reinsurer pays the part $(X_{i}-R)_{+}:=\max\left(  0,X_{i}%
-R\right)  $ of each claim beyond $R.$ Applying Wang's premium calculation
principle (\citeauthor{Wang96}, \citeyear{Wang96}), with a distortion function
equal to\ $x^{1/\rho},$ one defines what is called the proportional hazard
premium (PHP), where $\rho\geq1$ represents the distortion parameter or the
risk aversion index.\ Then, the PHP of loss for the layer from $R$ to infinity
is defined\ as follows:%
\[
\Pi_{\rho}(R):=\int_{R}^{\infty}\left(  \overline{F}(x)\right)  ^{1/\rho}dx,
\]
which may be rewritten into
\[
\Pi_{\rho}(R)=R(\overline{F}(R))^{1/\rho}%
{\displaystyle\int_{1}^{\infty}}
\left(  \frac{\overline{F}\left(  Rx\right)  }{\overline{F}\left(  R\right)
}\right)  ^{1/\rho}dx.
\]
By using the well-known Karamata theorem (see, for instance,
\citeauthor{deHF06}, \citeyear[page 363]{deHF06}), we get%
\[
\Pi_{\rho}(R)\sim\frac{\rho}{1/\gamma_{1}-\rho}R\left(  \overline{F}\left(
R\right)  \right)  ^{1/\rho},\text{ }0<\gamma_{1}<1/\rho,
\]
for large $R.$ Since $\overline{F}\in\mathcal{RV}_{-1/\gamma_{1}},$ then
$\overline{F}\left(  x\right)  \sim\overline{F}\left(  h\right)  \left(
x/h\right)  ^{-1/\gamma_{1}}$ as $x\rightarrow\infty,$ where $h=h_{n}%
:=H^{\leftarrow}\left(  1-k/n\right)  $ with $H^{\leftarrow}\left(  y\right)
:=\inf\left\{  x:H\left(  x\right)  \geq y\right\}  ,$ $0<y<1,$ denoting the
quantile function pertaining to $H.$ This leads us to derive a Weissman-type
estimator (see \citeauthor{Weissman78}, \citeyear{Weissman78}) for the
distribution tail $\overline{F}$ for censored data as follows:%
\[
\widehat{\overline{F}}\left(  x\right)  =\left(  \frac{x}{Z_{n-k:n}}\right)
^{-1/\widehat{\gamma}_{1}^{(H,c)}}\overline{F}_{n}\left(  Z_{n-k:n}\right)  .
\]
In the context of randomly right censored observations, the nonparametric
maximum likelihood estimator of $F$ is given by \cite{KM58} as the product
limit estimator%
\[
\overline{F}_{n}(x):=%
{\displaystyle\prod\limits_{Z_{i:n}\leq x}}
\left(  1-\dfrac{\delta_{\left[  i:n\right]  }}{n-i+1}\right)  =%
{\displaystyle\prod\limits_{Z_{i:n}\leq x}}
\left(  \dfrac{n-i}{n-i+1}\right)  ^{\delta_{\left[  i:n\right]  }},\text{ for
}x<Z_{n:n},
\]
which gives $\overline{F}_{n}(Z_{n-k:n})=%
{\displaystyle\prod\nolimits_{i=1}^{n-k}}
\left(  1-\frac{\delta_{\left[  i:n\right]  }}{n-i+1}\right)  .$ Thus, the
distribution tail estimator is of the form%
\[
\widehat{\overline{F}}\left(  x\right)  :=\left(  \frac{x}{Z_{n-k:n}}\right)
^{-1/\widehat{\gamma}_{1}^{(H,c)}}%
{\displaystyle\prod\limits_{i=1}^{n-k}}
\left(  1-\frac{\delta_{\left[  i:n\right]  }}{n-i+1}\right)  ,
\]
and consequently, we define the PHP estimator as follows:%

\[
\widehat{\Pi}_{\rho}(R):=\frac{\rho R}{1/\widehat{\gamma}_{1}^{(H,c)}-\rho
}\left(  \frac{R}{Z_{n-k:n}}\right)  ^{-1/\left(  \rho\widehat{\gamma}%
_{1}^{(H,c)}\right)  }%
{\displaystyle\prod\limits_{i=1}^{n-k}}
\left(  1-\frac{\delta_{\left[  i:n\right]  }}{n-i+1}\right)  ^{1/\rho}.
\]
The outline of the paper is as follows. In Section \ref{sec2}, we state our
main result that consists in the asymptotic normality of the newly proposed
estimator $\widehat{\Pi}_{\rho}(R),$ which we prove in Section \ref{sec4}. In
Section \ref{sec3}, we carry out a simulation study to illustrate its finite
sample behavior. Finally, some results, that are instrumental to our needs,
are gathered in the Appendix.

\section{Main results\textbf{\label{sec2}}}

\noindent It is well-known that the asymptotic normality of extreme value
theory based estimators is adequately achieved within the second-order
framework (see \citeauthor{deHS96}, \citeyear{deHS96}). Thus, it seems quite
natural to suppose that cdf's $F$ and $G$ satisfy the well-known second-order
condition of regular variation. That is, we assume that there exist two
constants $\tau_{j}\leq0$ (called second-order parameters) and two functions
$A_{j},$ $j=1,2,$ tending to zero and not changing sign near infinity, such
that for any $x>0$%
\begin{equation}%
\begin{array}
[c]{l}%
\underset{t\rightarrow\infty}{\lim}\dfrac{\overline{F}(tx)/\overline
{F}(t)-x^{-1/\gamma_{1}}}{A_{1}(t)}=x^{-1/\gamma_{1}}\dfrac{x^{\tau_{1}%
/\gamma_{1}}-1}{\gamma_{1}\tau_{1}},\medskip\\
\underset{t\rightarrow\infty}{\lim}\dfrac{\overline{G}(tx)/\overline
{G}(t)-x^{-1/\gamma_{2}}}{A_{2}(t)}=x^{-1/\gamma_{2}}\dfrac{x^{\tau_{2}%
/\gamma_{2}}-1}{\gamma_{2}\tau_{2}}.
\end{array}
\label{second-order}%
\end{equation}

\begin{theorem}
\label{Theo}Assume that the second-order conditions of regular variation
$\left(  \ref{second-order}\right)  $ hold, with $0<\gamma_{1}<1/\rho$ and let
$k=k_{n}$ be an integer sequence satisfying, in addition to $\left(
\ref{k}\right)  ,$ $\sqrt{k}A_{1}(h)\rightarrow\lambda_{1}.$ Assume further
that $R/h\rightarrow1.$ Then%
\[
\sqrt{k}\dfrac{\widehat{\Pi}_{\rho}(R)-\Pi_{\rho}(R)}{\left(  R/h\right)
^{-1/\rho\gamma_{1}}R\left(  \overline{F}\left(  h\right)  \right)  ^{1/\rho}%
}\overset{\mathcal{D}}{\rightarrow}\mathcal{N}\left(  \mu,\sigma^{2}\right)
,\text{ as }n\rightarrow\infty,
\]
where%
\[
\mu:=\dfrac{\rho\lambda_{1}}{\left(  1-p\tau_{1}\right)  \left(  1-\rho
\gamma_{1}\right)  ^{2}}+\dfrac{\lambda_{1}}{\rho\left(  \gamma_{1}+\tau
_{1}+\rho-2\right)  \left(  2-\rho-\gamma_{1}\right)  },
\]
and%
\[
\sigma^{2}:=\frac{\gamma_{1}^{2}}{\left(  1-\rho\gamma_{1}\right)  ^{2}%
}\left(  p\left(  2-p\right)  +\frac{\rho\left(  p-1\right)  }{\left(
1-\rho\gamma_{1}\right)  }+\frac{\rho^{2}\left(  1-2p\right)  }{p\left(
1-\rho\gamma_{1}\right)  ^{2}}\right)  .
\]

\end{theorem}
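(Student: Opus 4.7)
My strategy is to linearize the normalized difference $\sqrt{k}(\widehat{\Pi}_\rho(R)-\Pi_\rho(R))/[(R/h)^{-1/(\rho\gamma_1)}R(\overline{F}(h))^{1/\rho}]$ as a fixed linear functional of three jointly asymptotically Gaussian quantities, and then read off the mean and variance. The first step is to observe that $\widehat{\Pi}_\rho(R)= \frac{\rho R}{1/\widehat{\gamma}_1^{(H,c)}-\rho}(\widehat{\overline{F}}(R))^{1/\rho}$, while by Karamata's theorem applied under $(\ref{second-order})$ one has $\Pi_\rho(R)=\frac{\rho R}{1/\gamma_1-\rho}(\overline{F}(R))^{1/\rho}(1+O(A_1(h)))$, with the $O(A_1(h))$ remainder carrying an explicit coefficient in $(\gamma_1,\tau_1,\rho)$. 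Passing to the logarithmic scale and Taylor-expanding to first order gives
\[
\log\frac{\widehat{\Pi}_\rho(R)}{\Pi_\rho(R)}=\frac{\widehat{\gamma}_1^{(H,c)}-\gamma_1}{\gamma_1(1-\rho\gamma_1)}+\frac{1}{\rho}\log\frac{\widehat{\overline{F}}(R)}{\overline{F}(R)}+O(A_1(h))+o_P(k^{-1/2}).
\]
Inserting $Z_{n-k:n}$ as an anchor and applying $(\ref{second-order})$ again yields
\[
\log\frac{\widehat{\overline{F}}(R)}{\overline{F}(R)}=\left(\frac{1}{\gamma_1}-\frac{1}{\widehat{\gamma}_1^{(H,c)}}\right)\log\frac{R}{Z_{n-k:n}}+\log\frac{\overline{F}_n(Z_{n-k:n})}{\overline{F}(Z_{n-k:n})}+O(A_1(h)),
\]
and the first summand on the right is $o_P(k^{-1/2})$ under $R/h\to 1$ because $(1/\gamma_1-1/\widehat{\gamma}_1^{(H,c)})=O_P(k^{-1/2})$ while $\log(R/Z_{n-k:n})=\log(R/h)+\log(h/Z_{n-k:n})=o(1)+O_P(k^{-1/2})$.

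The remaining random quantity, the relative Kaplan--Meier error at $Z_{n-k:n}$, I would treat via the Nelson--Aalen linearization $\log\overline{F}_n(Z_{n-k:n})=-\sum_{i=1}^{n-k}\delta_{[i:n]}/(n-i+1)+o_P(k^{-1/2})$, combined with the regular variation identities $\overline{F}(Z_{n-k:n})\sim\overline{F}(h)(Z_{n-k:n}/h)^{-1/\gamma_1}$ and $\overline{F}\approx\overline{H}^{p}$, so as to express it as a linear combination of $\widehat{p}-p$, $\widehat{\gamma}^H-\gamma$ and $\log(Z_{n-k:n}/h)$. Joint asymptotic normality of the triple $(\sqrt{k}(\widehat{\gamma}^H-\gamma),\sqrt{k}(\widehat{p}-p),\sqrt{k}\log(Z_{n-k:n}/h))$ under $(\ref{second-order})$ is available from \cite{BMN15}, building on the empirical-process machinery of \cite{EFG08}; the bias vector is linear in $\lambda_1$ through $(\tau_1,p)$ and the covariance matrix is explicit in $(\gamma_1,p)$. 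Plugging this representation into the preceding linearization turns the quantity of interest into a fixed linear functional of asymptotically Gaussian coordinates, hence Gaussian.

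The mean $\mu$ then receives exactly two contributions: one from the asymptotic bias of $\widehat{\gamma}_1^{(H,c)}$, funnelled through the sensitivity $\rho/(1-\rho\gamma_1)^2$ of $\Pi_\rho$ with respect to $\gamma_1$, supplying the term $\rho\lambda_1/[(1-p\tau_1)(1-\rho\gamma_1)^2]$; and one from the Karamata second-order remainder in the approximation of $\Pi_\rho(R)$, computed explicitly from $(\ref{second-order})$ with parameter $\tau_1$, supplying the term $\lambda_1/[\rho(\gamma_1+\tau_1+\rho-2)(2-\rho-\gamma_1)]$. The variance $\sigma^2$ is the quadratic form of the linearization coefficients against the $3\times 3$ covariance matrix of the trio.

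The delicate part of the argument will be the variance computation: the three random coordinates $\widehat{\gamma}^H$, $\widehat{p}$ and the Nelson--Aalen sum approximating $\log\overline{F}_n(Z_{n-k:n})$ all depend on the same top $k$ order statistics together with their censoring indicators, so they are strongly correlated. Extracting the clean closed form of $\sigma^2$ stated in the theorem therefore requires careful book-keeping of the cross-covariances, relying on the Gaussian-process representation of the censored Hill statistic developed in \cite{BMN15} and some patient algebra.
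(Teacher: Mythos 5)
Your overall architecture --- a first-order logarithmic linearization splitting the normalized error into a $\widehat{\gamma}_1^{(H,c)}$ contribution, a tail-estimator contribution anchored at $Z_{n-k:n}$, and two deterministic second-order bias terms --- is essentially the multiplicative version of the paper's telescoping decomposition into $S_{n1},\dots,S_{n5}$, and you identify both contributions to $\mu$ correctly (as does your observation that $(1/\gamma_1-1/\widehat{\gamma}_1^{(H,c)})\log(R/Z_{n-k:n})=o_{\mathbb{P}}(k^{-1/2})$ under $R/h\to1$). The gap is in how you dispose of the Kaplan--Meier term. You propose to reduce $\log\bigl(\overline{F}_n(Z_{n-k:n})/\overline{F}(Z_{n-k:n})\bigr)$, via the Nelson--Aalen linearization, to a linear combination of $\widehat{p}-p$, $\widehat{\gamma}^{H}-\gamma$ and $\log(Z_{n-k:n}/h)$. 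This is not correct: the sum $\sum_{i=1}^{n-k}\delta_{[i:n]}/(n-i+1)$ is a functional of the \emph{lower} $n-k$ observations and their censoring indicators (not of the top $k$, as you state when discussing the covariances), and its fluctuation around the true cumulative hazard is an additional Gaussian coordinate --- asymptotically $\mathcal{N}(0,p(1-p))$ after $\sqrt{k}$-normalization --- which is correlated with, but not a function of, the trio you list. Your three-dimensional reduction therefore misses one coordinate, and both the claimed asymptotic normality and the quadratic form giving $\sigma^2$ would come out wrong.

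Supplying that missing coordinate is in fact the main technical content of the paper's proof: Proposition \ref{Prop1} (proved via Cs\"{o}rg\H{o}'s universal Gaussian approximation for censored empirical processes, the weighted approximations $(\ref{approx})$--$(\ref{approx2})$ and Lemma \ref{Lem1}) gives
\[
\sqrt{k}\left\{\frac{\overline{F}_n(Z_{n-k:n})}{\overline{F}(Z_{n-k:n})}-1\right\}
=\sqrt{\tfrac{n}{k}}\,\mathbf{B}_n(h)+\sqrt{\tfrac{k}{n}}\,\Delta_n+o_{\mathbb{P}}(1),
\]
with $\Delta_n$ an integral of the processes $\mathbf{B}_n,\mathbf{B}_n^{\ast}$ over $(0,h)$. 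Only with this representation, placed on the \emph{same} probability space as the Brownian-bridge representations of $\widehat{\gamma}^{H}$, $\widehat{p}$ and $Z_{n-k:n}$ from \cite{BMN15}, can the cross-covariances be computed (via $(\ref{covariances})$ and the limit relations at the end of Section \ref{sec4}) to produce the stated $\sigma^2$. None of this is available off the shelf from \cite{BMN15} or \cite{EFG08}, so your plan needs this proposition, or an equivalent martingale argument for the Kaplan--Meier estimator at an intermediate order statistic, before it can be completed.
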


\section{Simulation study\textbf{\label{sec3}}}

\noindent We carry out a simulation study to illustrate the performance of our
estimator, through two sets of censored and censoring data, both drawn from
the following Burr model. That is%
\[
\overline{F}\left(  x\right)  =\left(  1+x^{\eta/\gamma_{1}}\right)
^{-1/\eta}\text{ and }\overline{G}\left(  x\right)  =\left(  1+x^{\eta
/\gamma_{2}}\right)  ^{-1/\eta},\text{ }x\geq0,
\]
where $\gamma_{1},\gamma_{2}>0.$ We fix $\eta=1/4,$ we choose the values
$0.10$ and $0.25$ for $\gamma_{1}$ and two distinct aversion index values
$\rho=1.00$ and $\rho=1.10.$ For the proportion of the really observed extreme
values, we take $p=0.40,$ $0.60$ and $0.80,$ that is, we allow the percentage
of censoring in the right tail of $X$ to be $60\%,$ $40\%$ and $20\%$. For
each couple $\left(  \gamma_{1},p\right)  ,$ we solve the equation
$p=\gamma_{2}/(\gamma_{1}+\gamma_{2})$ to get the pertaining $\gamma_{2}%
$-value. We vary the common size $n$ of both samples $\left(  X_{1}%
,...,X_{n}\right)  $ and $\left(  Y_{1},...,Y_{n}\right)  ,$ then for each
size, we generate $1000$ independent replicates. Our overall results are taken
as the empirical means of the results obtained through the $1000$ repetitions.
To determine the optimal number of upper order statistics (that we denote by
$k^{\ast})$ used in the computation of $\widehat{\gamma}_{1}^{\left(
H,c\right)  },$ we apply the algorithm of \cite{RT97}, page 121. The retention
level $R$ is taken as the value of the intermediate order statistic
$Z_{n-k^{\ast}:n}.$ The simulation results are summarized in Table \ref{Tab1}
for $\gamma_{1}=0.10$ and in Table \ref{Tab2} for $\gamma_{1}=0.25.$ On the
light of these results we see that, from the point of view of the rmse, the
estimation accuracy increases when the censoring percentage decreases, which
seems logical. On the other hand, we note that the sample size does not have a
significant effect on the estimation when the percentage of observed data is
high. Moreover, the estimator performs better for the smaller value of the
distortion parameter $\rho.$%

\begin{table}[h] \centering
\begin{tabular}
[c]{ccccccccc}\hline\hline
\multicolumn{9}{c}{$p=0.40$}\\\hline
$\rho$ & \multicolumn{4}{|c}{$1.00$} & \multicolumn{4}{||c}{$1.10$}\\\hline
$n$ & \multicolumn{1}{|c}{$\Pi_{\rho}(R)$} & $\widehat{\Pi}_{\rho}(R)$ &
abs.bias & rmse & \multicolumn{1}{||c}{$\Pi_{\rho}(R)$} & $\widehat{\Pi}%
_{\rho}(R)$ & abs.bias & rmse\\\hline
\multicolumn{1}{r}{$500$} & \multicolumn{1}{|c}{$0.0175$} & $0.0274$ &
$0.0099$ & $0.1372$ & \multicolumn{1}{||c}{$0.0237$} & $0.0429$ & $0.0192$ &
$0.2012$\\
\multicolumn{1}{r}{$1000$} & \multicolumn{1}{|c}{$0.0174$} & $0.0197$ &
$0.0023$ & $0.0616$ & \multicolumn{1}{||c}{$0.0236$} & $0.0339$ & $0.0102$ &
$0.0863$\\
\multicolumn{1}{r}{$1500$} & \multicolumn{1}{|c}{$0.0170$} & $0.0158$ &
$0.0012$ & $0.0146$ & \multicolumn{1}{||c}{$0.0233$} & $0.0233$ & $0.0000$ &
$0.0203$\\\hline
\multicolumn{9}{c}{$p=0.60$}\\\hline
\multicolumn{1}{r}{$500$} & \multicolumn{1}{|c}{$0.0097$} & $0.0066$ &
$0.0032$ & $0.0135$ & \multicolumn{1}{||c}{$0.0142$} & $0.0103$ & $0.0039$ &
$0.0145$\\
\multicolumn{1}{r}{$1000$} & \multicolumn{1}{|c}{$0.0095$} & $0.0037$ &
$0.0058$ & $0.0065$ & \multicolumn{1}{||c}{$0.0138$} & $0.0063$ & $0.0076$ &
$0.0091$\\
\multicolumn{1}{r}{$1500$} & \multicolumn{1}{|c}{$0.0095$} & $0.0029$ &
$0.0066$ & $0.0069$ & \multicolumn{1}{||c}{$0.0137$} & $0.0045$ & $0.0092$ &
$0.0098$\\\hline
\multicolumn{9}{c}{$p=0.80$}\\\hline
\multicolumn{1}{r}{$500$} & \multicolumn{1}{|c}{$0.0062$} & $0.0014$ &
$0.0048$ & $0.0049$ & \multicolumn{1}{||c}{$0.0093$} & $0.0026$ & $0.0067$ &
$0.0074$\\
\multicolumn{1}{r}{$1000$} & \multicolumn{1}{|c}{$0.0062$} & $0.0008$ &
$0.0054$ & $0.0055$ & \multicolumn{1}{||c}{$0.0092$} & $0.0014$ & $0.0077$ &
$0.0078$\\
\multicolumn{1}{r}{$1500$} & \multicolumn{1}{|c}{$0.0060$} & $0.0006$ &
$0.0054$ & $0.0054$ & \multicolumn{1}{||c}{$0.0090$} & $0.0010$ & $0.0079$ &
$0.0079$\\\hline\hline
\multicolumn{1}{l}{} &  &  &  &  &  &  &  &
\end{tabular}
\caption{PHP estimates based on 1000 right-censored samples of size n from Burr model with tail index $\gamma_{1}=0.10$.}\label{Tab1}%
\end{table}%
%

\begin{table}[h] \centering
\begin{tabular}
[c]{ccccccccc}\hline\hline
\multicolumn{9}{c}{$p=0.40$}\\\hline
$\rho$ & \multicolumn{4}{|c}{$1.00$} & \multicolumn{4}{||c}{$1.10$}\\\hline
$n$ & \multicolumn{1}{|c}{$\Pi_{\rho}(R)$} & $\widehat{\Pi}_{\rho}(R)$ &
abs.bias & rmse & \multicolumn{1}{||c}{$\Pi_{\rho}(R)$} & $\widehat{\Pi}%
_{\rho}(R)$ & abs.bias & rmse\\\hline
\multicolumn{1}{r}{$500$} & \multicolumn{1}{|c}{$0.0265$} & $0.0767$ &
$0.0501$ & $0.3604$ & \multicolumn{1}{||c}{$0.0410$} & $0.1148$ & $0.0738$ &
$0.8875$\\
\multicolumn{1}{r}{$1000$} & \multicolumn{1}{|c}{$0.0266$} & $0.0633$ &
$0.0368$ & $0.1602$ & \multicolumn{1}{||c}{$0.0411$} & $0.1134$ & $0.0723$ &
$0.4842$\\
\multicolumn{1}{r}{$1500$} & \multicolumn{1}{|c}{$0.0266$} & $0.0462$ &
$0.0196$ & $0.0664$ & \multicolumn{1}{||c}{$0.0409$} & $0.0632$ & $0.0223$ &
$0.0941$\\\hline
\multicolumn{9}{c}{$p=0.60$}\\\hline
\multicolumn{1}{r}{$500$} & \multicolumn{1}{|c}{$0.0196$} & $0.0229$ &
$0.0034$ & $0.0965$ & \multicolumn{1}{||c}{$0.0310$} & $0.0222$ & $0.0088$ &
$0.4596$\\
\multicolumn{1}{r}{$1000$} & \multicolumn{1}{|c}{$0.0197$} & $0.0119$ &
$0.0078$ & $0.0133$ & \multicolumn{1}{||c}{$0.0317$} & $0.0203$ & $0.0114$ &
$0.0236$\\
\multicolumn{1}{r}{$1500$} & \multicolumn{1}{|c}{$0.0199$} & $0.0093$ &
$0.0106$ & $0.0140$ & \multicolumn{1}{||c}{$0.0316$} & $0.0152$ & $0.0164$ &
$0.0196$\\\hline
\multicolumn{9}{c}{$p=0.80$}\\\hline
\multicolumn{1}{r}{$500$} & \multicolumn{1}{|c}{$0.0153$} & $0.0056$ &
$0.0097$ & $0.0118$ & \multicolumn{1}{||c}{$0.0251$} & $0.0091$ & $0.0160$ &
$0.0178$\\
\multicolumn{1}{r}{$1000$} & \multicolumn{1}{|c}{$0.0154$} & $0.0030$ &
$0.0125$ & $0.0127$ & \multicolumn{1}{||c}{$0.0254$} & $0.0054$ & $0.0200$ &
$0.0204$\\
\multicolumn{1}{r}{$1500$} & \multicolumn{1}{|c}{$0.0157$} & $0.0020$ &
$0.0136$ & $0.0137$ & \multicolumn{1}{||c}{$0.0254$} & $0.0040$ & $0.0214$ &
$0.0215$\\\hline\hline
\multicolumn{1}{l}{} &  &  &  &  &  &  &  &
\end{tabular}
\caption{PHP estimates based on 1000 right-censored samples of size n from Burr model with tail index $\gamma_{1}=0.25$.}\label{Tab2}%
\end{table}%

\section{Proof\textbf{\label{sec4}}}

\noindent Before we start the proof of the theorem, let us give a brief
introduction on some uniform empirical processes under random censoring. To
this end, we define the functions%
\[
H^{\left(  j\right)  }\left(  v\right)  :=\mathbb{P}\left(  Z\leq v,\text{
}\delta=j\right)  ,\text{ }j=0,1;\text{ }v\geq0,
\]
which have a prominent role to play in the random censorship setting. Their
empirical counterparts are defined by%
\[
H_{n}^{\left(  j\right)  }(v):=\frac{1}{n}\sum_{i=1}^{n}\mathbf{1}(Z_{i}\leq
v,\delta_{i}=j),\text{ }j=0,1;\text{ }v\geq0.
\]
In the sequel, we will use the following two empirical processes%
\[
\sqrt{n}\left(  \overline{H}_{n}^{\left(  j\right)  }(v)-\overline{H}^{\left(
j\right)  }(v)\right)  ,\text{ }j=0,1;\text{ }v\geq0,
\]
which may be represented, almost surely, by a uniform empirical process.
Indeed, let us define, for each $i=1,...,n$ with $\theta:=H^{\left(  1\right)
}\left(  \infty\right)  ,$ the following rv%
\[
U_{i}:=\delta_{i}H^{\left(  1\right)  }(Z_{i})+(1-\delta_{i})(\theta
+H^{\left(  0\right)  }(Z_{i})).
\]
From \cite{EK92}, the rv's $U_{1},...,U_{n}$ are independent and identically
distributed according to the $(0,1)$-uniform law. The empirical cdf and the
uniform empirical process based upon $U_{1},...,U_{n}$ are respectively
denoted by%
\[
\mathbb{U}_{n}(s):\mathbb{=}\frac{1}{n}\sum_{i=1}^{n}\mathbf{1}(U_{i}\leq
s)\text{ and }\alpha_{n}(s):=\sqrt{n}(\mathbb{U}_{n}(s)-s),\text{ }0\leq
s\leq1.
\]
\cite{DE96} state that almost surely%
\[
H_{n}^{\left(  0\right)  }(v)=\mathbb{U}_{n}(H^{\left(  0\right)  }%
(v)+\theta)-\mathbb{U}_{n}(\theta),\text{ for }0<H^{\left(  0\right)
}(v)<1-\theta,
\]
and%
\[
H_{n}^{\left(  1\right)  }(v)=\mathbb{U}_{n}(H^{\left(  1\right)  }(v)),\text{
for }0<H^{\left(  1\right)  }(v)<\theta.
\]
It is easy to verify that we almost surely have%
\begin{equation}
\sqrt{n}\left(  \overline{H}_{n}^{\left(  1\right)  }\left(  v\right)
-\overline{H}^{\left(  1\right)  }\left(  v\right)  \right)  =\alpha
_{n}\left(  \theta\right)  -\alpha_{n}\left(  \theta-\overline{H}^{\left(
1\right)  }\left(  v\right)  \right)  ,\text{ for }0<\overline{H}^{\left(
1\right)  }\left(  v\right)  <\theta, \label{rep-H1}%
\end{equation}
and%
\begin{equation}
\sqrt{n}\left(  \overline{H}_{n}^{\left(  0\right)  }\left(  v\right)
-\overline{H}^{\left(  0\right)  }\left(  v\right)  \right)  =-\alpha
_{n}\left(  1-\overline{H}^{\left(  0\right)  }\left(  v\right)  \right)
,\text{ for }0<\overline{H}^{\left(  0\right)  }\left(  v\right)  <1-\theta.
\label{rep-H0}%
\end{equation}
Our methodology strongly relies on the well-known Gaussian approximation given
in \ref{Prop0}. For our needs, we use the following form:%
\begin{equation}
\underset{1/n\leq s\leq1}{\sup}\frac{n^{\zeta}\left\vert \alpha_{n}%
(1-s)-B_{n}(1-s)\right\vert }{s^{1/2-\zeta}}=O_{\mathbb{P}}(1). \label{approx}%
\end{equation}
For the increments $\alpha_{n}(\theta)-\alpha_{n}(\theta-s),$ we will need an
approximation of the same type as $\left(  \ref{approx}\right)  $. Following
similar arguments, mutatis mutandis, as those used to in the proof of
assertions (2.2) of Theorem 2.1 and (2.8) of Theorem 2.2 in \cite{CHM86}, we
may show that, for every $0<\theta<1$ and $0\leq\zeta<1/4,$ we have%
\begin{equation}
\underset{1/n\leq s\leq\theta}{\sup}\frac{n^{\zeta}\left\vert \left\{
\alpha_{n}(\theta)-\alpha_{n}(\theta-s)\right\}  -\left\{  B_{n}\left(
\theta\right)  -B_{n}(\theta-s)\right\}  \right\vert }{s^{1/2-\zeta}%
}=O_{_{\mathbb{P}}}(1). \label{approx2}%
\end{equation}
The following Gaussian \ processes will be crucial to our needs:%
\begin{equation}
\mathbf{B}_{n}\left(  v\right)  :=B_{n}\left(  \theta\right)  -B_{n}\left(
\theta-\overline{H}^{\left(  1\right)  }\left(  v\right)  \right)  ,\text{ for
}0<\overline{H}^{\left(  1\right)  }\left(  v\right)  <\theta, \label{B}%
\end{equation}
and%
\begin{equation}
\mathbf{B}_{n}^{\ast}\left(  v\right)  :=\mathbf{B}_{n}\left(  v\right)
-B_{n}\left(  1-\overline{H}^{\left(  0\right)  }\left(  v\right)  \right)
,\text{ for }0<\overline{H}^{\left(  0\right)  }\left(  v\right)  <1-\theta.
\label{Bn-etoil}%
\end{equation}

\subsection{Proof of Theorem \ref{Theo}\-}

In the sequel, for two sequences of rv's, we write $V_{n}^{\left(  1\right)
}=o_{\mathbb{P}}\left(  V_{n}^{\left(  2\right)  }\right)  $ and
$V_{n}^{\left(  1\right)  }\approx V_{n}^{\left(  2\right)  },$ as
$n\rightarrow\infty,$ to say that $V_{n}^{\left(  1\right)  }/V_{n}^{\left(
2\right)  }\rightarrow0$ in probability and $V_{n}^{\left(  1\right)  }%
=V_{n}^{\left(  2\right)  }\left(  1+o_{\mathbb{P}}\left(  1\right)  \right)
$ respectively. With the premium%
\[
\Pi_{\rho}(R)=R(\overline{F}(R))^{1/\rho}%
{\displaystyle\int_{1}^{\infty}}
\left(  \frac{\overline{F}(Rx)}{\overline{F}(R)}\right)  ^{1/\rho}dx,
\]
and its estimator%
\[
\widehat{\Pi}_{\rho}(R)=\frac{\rho R}{1/\widehat{\gamma}_{1}^{(H,c)}-\rho
}\left(  \frac{R}{Z_{n-k:n}}\right)  ^{-1/\left(  \rho\widehat{\gamma}%
_{1}^{(H,c)}\right)  }\left(  \overline{F}_{n}(Z_{n-k:n})\right)  ^{1/\rho},
\]
it is easy to verify that%
\[
\sqrt{k}\frac{\widehat{\Pi}_{\rho}(R)-\Pi_{\rho}(R)}{\left(  R/h\right)
^{-1/\left(  \rho\gamma_{1}\right)  }R\left(  \overline{F}\left(  h\right)
\right)  ^{1/\rho}}=%
{\displaystyle\sum\limits_{i=1}^{5}}
S_{ni},
\]
where%
\begin{align*}
S_{n1}  &  :=\frac{\rho}{1/\widehat{\gamma}_{1}^{(H,c)}-\rho}\left(
\frac{\overline{F}(Z_{n-k:n})}{\overline{F}(h)}\right)  ^{1/\rho}\left(
\frac{\overline{F}_{n}(Z_{n-k:n})}{\overline{F}(Z_{n-k:n})}\right)  ^{1/\rho
}\\
&  \times\sqrt{k}\left\{  \left(  \frac{\left(  R/Z_{n-k:n}\right)
^{-1/\widehat{\gamma}_{1}^{(H,c)}}}{\left(  R/h\right)  ^{-1/\gamma_{1}}%
}\right)  ^{1/\rho}-1\right\}  ,
\end{align*}%
\[
S_{n2}:=\left(  \frac{\overline{F}(Z_{n-k:n})}{\overline{F}(h)}\right)
^{1/\rho}\left(  \frac{\overline{F}_{n}(Z_{n-k:n})}{\overline{F}(Z_{n-k:n}%
)}\right)  ^{1/\rho}\sqrt{k}\left\{  \frac{\rho}{1/\widehat{\gamma}%
_{1}^{(H,c)}-\rho}-\frac{\rho}{1/\gamma_{1}-\rho}\right\}  ,
\]%
\[
S_{n3}:=\frac{\rho}{1/\gamma_{1}-\rho}\left(  \frac{\overline{F}(Z_{n-k:n}%
)}{\overline{F}(h)}\right)  ^{1/\rho}\sqrt{k}\left\{  \left(  \frac
{\overline{F}_{n}(Z_{n-k:n})}{\overline{F}(Z_{n-k:n})}\right)  ^{1/\rho
}-1\right\}  ,
\]%
\[
S_{n4}:=\frac{\rho}{1/\gamma_{1}-\rho}\sqrt{k}\left\{  \left(  \frac
{\overline{F}(Z_{n-k:n})}{\overline{F}(h)}\right)  ^{1/\rho}-1\right\}  ,
\]
and%
\[
S_{n5}:=\sqrt{k}\left\{  \frac{\rho}{1/\gamma_{1}-\rho}-\frac{\left(
\overline{F}\left(  R\right)  /\overline{F}(h)\right)  ^{1/\rho}}{\left(
R/h\right)  ^{-1/\left(  \rho\gamma_{1}\right)  }}%
{\displaystyle\int_{1}^{\infty}}
\left(  \frac{\overline{F}(Rx)}{\overline{F}(R)}\right)  ^{1/\rho}dx\right\}
.
\]
We will represent the first three terms $S_{ni},$ $i=1,2,3,$ in terms of the
Gaussian processes $\mathbf{B}_{n}$ and $\mathbf{B}_{n}^{\ast}$ and we will
show that $S_{n4}\overset{\mathbb{P}}{\rightarrow}0$ while $S_{n5}$ converges
to a deterministic limit. For the first term $S_{n1},$ we have $\widehat
{\gamma}_{1}^{\left(  H,c\right)  }\overset{\mathbb{P}}{\rightarrow}\gamma
_{1}$ (see \citeauthor{BMN15}, \citeyear{BMN15}) and $Z_{n-k:n}/h\overset
{\mathbb{P}}{\rightarrow}1,$ which, in view of the regular variation of
$\overline{F},$ implies that $\overline{F}\left(  Z_{n-k:n}\right)
/\overline{F}\left(  h\right)  \overset{\mathbb{P}}{\rightarrow}1.$ Moreover,
from $\left(  \ref{p(1-p)}\right)  $ we have $\overline{F}_{n}\left(
Z_{n-k:n}\right)  /\overline{F}\left(  Z_{n-k:n}\right)  \overset{\mathbb{P}%
}{\rightarrow}1.$ It follows that $S_{n1}=S_{n1}^{(1)}+S_{n1}^{(2)},$ where%
\begin{align*}
S_{n1}^{(1)}  &  :=\left(  1+o_{\mathbb{P}}\left(  1\right)  \right)
\frac{\rho\gamma_{1}}{1-\rho\gamma_{1}}\\
&  \times\sqrt{k}\left\{  \left(  \frac{Z_{n-k:n}}{h}\right)  ^{1/\left(
\rho\widehat{\gamma}_{1}^{(H,c)}\right)  }-1\right\}  \left(  \left(  \frac
{R}{h}\right)  ^{1/\gamma_{1}-1/\widehat{\gamma}_{1}^{(H,c)}}\right)
^{1/\rho},
\end{align*}
and%
\[
S_{n1}^{(2)}:=\left(  1+o_{\mathbb{P}}\left(  1\right)  \right)  \frac
{\rho\gamma_{1}}{1-\rho\gamma_{1}}\sqrt{k}\left\{  \left(  \left(  \frac{R}%
{h}\right)  ^{1/\gamma_{1}-1/\widehat{\gamma}_{1}^{(H,c)}}\right)  ^{1/\rho
}-1\right\}  .
\]
For $S_{n1}^{(1)},$ we use the mean value theorem, the consistency of
$\widehat{\gamma}_{1}^{(H,c)}$ and the fact that $Z_{n-k:n}/h\overset
{\mathbb{P}}{\rightarrow}1,$ to have%
\[
S_{n1}^{(1)}=\left(  1+o_{\mathbb{P}}\left(  1\right)  \right)  \frac
{1}{1-\rho\gamma_{1}}\sqrt{k}\left(  \frac{Z_{n-k:n}}{h}-1\right)  .
\]
Next, we apply result (2.7) of Theorem 2.1 in \cite{BMN15} to get%
\[
S_{n1}^{(1)}=\left(  1+o_{\mathbb{P}}\left(  1\right)  \right)  \frac{\gamma
}{1-\rho\gamma_{1}}\sqrt{\frac{n}{k}}\mathbf{B}_{n}^{\ast}\left(  h\right)  .
\]
In view of the consistency and asymptotic normality of $\widehat{\gamma}%
_{1}^{(H,c)}$ and the assumption $R/h\rightarrow1,$ we show, by applying the
mean value theorem twice, that $S_{n1}^{(2)}=o_{\mathbb{P}}(1).$ Thus, we end
up with%
\begin{equation}
S_{n1}=\left(  1+o_{\mathbb{P}}\left(  1\right)  \right)  \frac{\gamma}%
{1-\rho\gamma_{1}}\sqrt{\frac{n}{k}}\mathbf{B}_{n}^{\ast}\left(  h\right)
+o_{\mathbb{P}}(1). \label{Sn1}%
\end{equation}
By similar arguments and using the mean value theorem once again, we easily
show that%
\[
S_{n2}=\left(  1+o_{\mathbb{P}}\left(  1\right)  \right)  \frac{\rho}{\left(
1-\rho\gamma_{1}\right)  ^{2}}\sqrt{k}\left(  \widehat{\gamma}_{1}%
^{(H,c)}-\gamma_{1}\right)  ,
\]%
\[
S_{n3}=\left(  1+o_{\mathbb{P}}\left(  1\right)  \right)  \frac{\gamma_{1}%
}{1-\rho\gamma_{1}}\sqrt{k}\left(  \frac{\overline{F}_{n}(Z_{n-k:n}%
)}{\overline{F}(Z_{n-k:n})}-1\right)  ,
\]
and%
\[
S_{n4}=\left(  1+o_{\mathbb{P}}\left(  1\right)  \right)  \frac{\gamma_{1}%
}{1-\rho\gamma_{1}}\sqrt{k}\left\{  \frac{\overline{F}(Z_{n-k:n})}%
{\overline{F}(h)}-1\right\}  .
\]
By applying result (2.9) of Theorem 2.1 in \cite{BMN15} we get, after a change
of variables, that%
\begin{align}
S_{n2}  &  =\left(  1+o_{\mathbb{P}}\left(  1\right)  \right)  \frac{\rho
}{\left(  1-\rho\gamma_{1}\right)  ^{2}}\left\{  \frac{1}{p}\sqrt{\frac{n}{k}%
}\int_{1}^{\infty}v^{-1}\mathbf{B}_{n}^{\ast}\left(  hv\right)  dv-\frac
{\gamma_{1}}{p}\sqrt{\frac{n}{k}}\mathbf{B}_{n}\left(  h\right)  \right\}
\nonumber\\
&  +\left(  1+o_{\mathbb{P}}\left(  1\right)  \right)  \frac{\rho\sqrt{k}%
A_{1}\left(  h\right)  }{\left(  1-p\tau_{1}\right)  \left(  1-\rho\gamma
_{1}\right)  ^{2}}. \label{Sn2}%
\end{align}
From Proposition \ref{Prop1}, we infer that%
\begin{equation}
S_{n3}=\left(  1+o_{\mathbb{P}}\left(  1\right)  \right)  \frac{\gamma_{1}%
}{1-\rho\gamma_{1}}\left(  \sqrt{\dfrac{n}{k}}\mathbf{B}_{n}\left(  h\right)
+\sqrt{\dfrac{k}{n}}\Delta_{n}\right)  +o_{\mathbb{P}}\left(  1\right)  .
\label{Sn3}%
\end{equation}
Now, we decompose $S_{n4}$ into the sum of two terms%
\[
S_{n4}^{\left(  1\right)  }:=\left(  1+o_{\mathbb{P}}\left(  1\right)
\right)  \frac{\gamma_{1}}{1-\rho\gamma_{1}}\sqrt{k}\left\{  \frac
{\overline{F}(Z_{n-k:n})}{\overline{F}(h)}-\left(  \frac{Z_{n-k:n}}{h}\right)
^{-1/\gamma_{1}}\right\}  ,
\]
and%
\[
S_{n4}^{\left(  2\right)  }:=\left(  1+o_{\mathbb{P}}\left(  1\right)
\right)  \frac{\gamma_{1}}{1-\rho\gamma_{1}}\sqrt{k}\left\{  \left(
\frac{Z_{n-k:n}}{h}\right)  ^{-1/\gamma_{1}}-1\right\}  .
\]
The second-order condition $\left(  \ref{second-order}\right)  $ of
$\overline{F}$ and the fact that $Z_{n-k:n}/h\overset{\mathbb{P}}{\rightarrow
}1$ yield that%
\[
S_{n4}^{\left(  1\right)  }=o_{\mathbb{P}}\left(  \sqrt{k}A_{1}\left(
h\right)  \right)  =o_{\mathbb{P}}\left(  1\right)  .
\]
For $S_{n4}^{\left(  2\right)  },$ we, once again, apply the mean value
theorem (with $Z_{n-k:n}/h\overset{\mathbb{P}}{\rightarrow}1)$ then we use
result (2.7) of Theorem 2.1 in \cite{BMN15} to get%
\[
S_{n4}^{\left(  2\right)  }=-\left(  1+o_{\mathbb{P}}\left(  1\right)
\right)  \frac{\gamma}{1-\rho\gamma_{1}}\sqrt{\frac{n}{k}}\mathbf{B}_{n}%
^{\ast}\left(  h\right)  .
\]
Consequently, we have%
\begin{equation}
S_{n4}=-\left(  1+o_{\mathbb{P}}\left(  1\right)  \right)  \frac{\gamma
}{1-\rho\gamma_{1}}\sqrt{\frac{n}{k}}\mathbf{B}_{n}^{\ast}\left(  h\right)
+o_{\mathbb{P}}\left(  1\right)  . \label{Sn4}%
\end{equation}
For the last term $S_{n5},$ we start by decomposing it into the sum of%
\[
S_{n5}^{(1)}:=-\frac{\rho\gamma_{1}}{1-\rho\gamma_{1}}\frac{1}{\left(
R/h\right)  ^{-1/\left(  \rho\gamma_{1}\right)  }}\sqrt{k}\left\{  \left(
\frac{\overline{F}(R)}{\overline{F}(h)}\right)  ^{1/\rho}-\left(  \left(
\frac{R}{h}\right)  ^{-1/\gamma_{1}}\right)  ^{1/\rho}\right\}  ,
\]
and%
\[
S_{n5}^{(2)}:=-\left(  \frac{\left(  \overline{F}\left(  R\right)
/\overline{F}(h)\right)  }{\left(  R/h\right)  ^{-1/\gamma_{1}}}\right)
^{1/\rho}\sqrt{k}%
{\displaystyle\int_{1}^{\infty}}
\left(  \left(  \frac{\overline{F}(Rx)}{\overline{F}(R)}\right)  ^{1/\rho
}-\left(  x^{-1/\gamma_{1}}\right)  ^{1/\rho}\right)  dx.
\]
By similar arguments as those used for $S_{n4}^{\left(  1\right)  },$ we show
that (here we use the assumption that $R/h\rightarrow1)$%
\[
S_{n5}^{(1)}=o_{\mathbb{P}}\left(  \sqrt{k}A_{1}\left(  h\right)  \right)
=o_{\mathbb{P}}\left(  1\right)  .
\]
For $S_{n5}^{(2)},$ we first apply the mean value theorem to have%
\[
S_{n5}^{(2)}=-\frac{1}{\rho}\sqrt{k}%
{\displaystyle\int_{1}^{\infty}}
\left(  \frac{\overline{F}(Rx)}{\overline{F}(R)}-x^{-1/\gamma_{1}}\right)
\zeta^{1/\rho-1}(x)dx,
\]
where $\zeta$ lies between $\overline{F}(Rx)/\overline{F}(R)$ and
$x^{-1/\gamma_{1}}.$ Then we use Potter's inequalities, given in assertion 5
of Proposition B.1.9 in \cite{deHF06}, to get%
\[
S_{n5}^{(2)}=\left(  1+o\left(  1\right)  \right)  \frac{\sqrt{k}A_{1}\left(
h\right)  }{\rho\left(  \gamma_{1}+\tau_{1}+\rho-2\right)  \left(
2-\rho-\gamma_{1}\right)  }.
\]
Therefore%
\begin{equation}
S_{n5}=\left(  1+o\left(  1\right)  \right)  \frac{\sqrt{k}A_{1}\left(
h\right)  }{\rho\left(  \gamma_{1}+\tau_{1}+\rho-2\right)  \left(
2-\rho-\gamma_{1}\right)  }+o_{\mathbb{P}}\left(  1\right)  . \label{Sn5}%
\end{equation}
Finally, by gathering results $\left(  \ref{Sn1}\right)  ,$ $\left(
\ref{Sn2}\right)  ,$ $\left(  \ref{Sn3}\right)  ,$ $\left(  \ref{Sn4}\right)
$ and $\left(  \ref{Sn5}\right)  ,$ we obtain the following asymptotic
representation to the premium estimator:%
\begin{align}
\sqrt{k}\frac{\widehat{\Pi}_{\rho}(R)-\Pi_{\rho}(R)}{\left(  R/h\right)
^{-1/\rho\gamma_{1}}R\left(  \overline{F}\left(  h\right)  \right)  ^{1/\rho
}}  &  =o_{\mathbb{P}}\left(  1\right)  +\frac{\gamma_{1}}{1-\rho\gamma_{1}%
}\sqrt{\dfrac{k}{n}}\Delta_{n}+\frac{1}{1-\rho\gamma_{1}}\sqrt{\frac{n}{k}%
}\Gamma_{n}\label{prim-approx}\\
&  +\left\{  \dfrac{\rho\sqrt{k}A_{1}\left(  h\right)  }{\left(  1-p\tau
_{1}\right)  \left(  1-\rho\gamma_{1}\right)  ^{2}}+\dfrac{\sqrt{k}%
A_{1}\left(  h\right)  }{\rho\left(  \gamma_{1}+\tau_{1}+\rho-2\right)
\left(  2-\rho-\gamma_{1}\right)  }\right\}  ,\nonumber
\end{align}
where $\Delta_{n}$ is as defined in \ref{delta} and%
\[
\Gamma_{n}:=\gamma_{1}\left(  1-\frac{\rho}{p\left(  1-\rho\gamma_{1}\right)
}\right)  \mathbf{B}_{n}\left(  h\right)  +\frac{\rho}{p\left(  1-\rho
\gamma_{1}\right)  }\int_{1}^{\infty}v^{-1}\mathbf{B}_{n}^{\ast}\left(
hv\right)  dv.
\]
From $\left(  \ref{prim-approx}\right)  ,$ we deduce that $\sqrt{k}\left(
\widehat{\Pi}_{\rho}(R)-\Pi_{\rho}(R)\right)  /\left(  \left(  R/h\right)
^{-1/\rho\gamma_{1}}R\left(  \overline{F}\left(  h\right)  \right)  ^{1/\rho
}\right)  $ is asymptotically Gaussian with mean%
\[
\left\{  \dfrac{\rho}{\left(  1-p\tau_{1}\right)  \left(  1-\rho\gamma
_{1}\right)  ^{2}}+\dfrac{1}{\rho\left(  \gamma_{1}+\tau_{1}+\rho-2\right)
\left(  2-\rho-\gamma_{1}\right)  }\right\}  \lim_{n\rightarrow\infty}\sqrt
{k}A_{1}\left(  h\right)  =\mu,
\]
and variance%
\[
\lim_{n\rightarrow\infty}\mathbf{E}\left[  \frac{\gamma_{1}}{1-\rho\gamma_{1}%
}\sqrt{\dfrac{k}{n}}\Delta_{n}+\frac{1}{1-\rho\gamma_{1}}\sqrt{\frac{n}{k}%
}\Gamma_{n}\right]  ^{2}.
\]
Note that from the covariance structure in \cite{C96}, page 2768, we have the
following useful formulas:%
\begin{equation}
\left\{
\begin{tabular}
[c]{l}%
$\mathbf{E}\left[  \mathbf{B}_{n}\left(  u\right)  \mathbf{B}_{n}\left(
v\right)  \right]  =\min\left(  \overline{H}^{\left(  1\right)  }\left(
u\right)  ,\overline{H}^{\left(  1\right)  }\left(  v\right)  \right)
-\overline{H}^{\left(  1\right)  }\left(  u\right)  \overline{H}^{\left(
1\right)  }\left(  v\right)  ,\smallskip$\\
$\mathbf{E}\left[  \mathbf{B}_{n}^{\ast}\left(  u\right)  \mathbf{B}_{n}%
^{\ast}\left(  v\right)  \right]  =\min\left(  \overline{H}\left(  u\right)
,\overline{H}\left(  v\right)  \right)  -\overline{H}\left(  u\right)
\overline{H}\left(  v\right)  ,\smallskip$\\
$\mathbf{E}\left[  \mathbf{B}_{n}\left(  u\right)  \mathbf{B}_{n}^{\ast
}\left(  v\right)  \right]  =\min\left(  \overline{H}^{\left(  1\right)
}\left(  u\right)  ,\overline{H}^{\left(  1\right)  }\left(  v\right)
\right)  -\overline{H}^{\left(  1\right)  }\left(  u\right)  \overline
{H}\left(  v\right)  .$%
\end{tabular}
\ \right.  \label{covariances}%
\end{equation}
After elementary but very tedious computations, using these formulas with
l'H\^{o}pital's rule, we get as $n\rightarrow\infty,$%
\[%
{\displaystyle\int_{0}^{h}}
\frac{\mathbf{E}\left[  \mathbf{B}_{n}\left(  u\right)  \mathbf{B}_{n}\left(
h\right)  \right]  }{\overline{H}^{2}\left(  u\right)  }d\overline{H}\left(
u\right)  \rightarrow-p,\text{ }%
{\displaystyle\int_{0}^{h}}
\frac{\mathbf{E}\left[  \mathbf{B}_{n}\left(  h\right)  \mathbf{B}_{n}^{\ast
}\left(  u\right)  \right]  }{\overline{H}^{2}\left(  u\right)  }d\overline
{H}^{\left(  1\right)  }\left(  u\right)  \rightarrow-p^{2},
\]%
\[%
{\displaystyle\int_{0}^{h}}
{\displaystyle\int_{1}^{\infty}}
\dfrac{\mathbf{E}\left[  \mathbf{B}_{n}\left(  v\right)  \mathbf{B}_{n}^{\ast
}\left(  hu\right)  \right]  }{u\overline{H}^{2}\left(  v\right)
}dud\overline{H}\left(  v\right)  \rightarrow-p\gamma,
\]%
\[%
{\displaystyle\int_{0}^{h}}
{\displaystyle\int_{1}^{\infty}}
\dfrac{\mathbf{E}\left[  \mathbf{B}_{n}^{\ast}\left(  v\right)  \mathbf{B}%
_{n}^{\ast}\left(  hu\right)  \right]  }{u\overline{H}^{2}\left(  v\right)
}dud\overline{H}^{\left(  1\right)  }\left(  v\right)  \rightarrow-p\gamma,
\]%
\[
\dfrac{k}{n}%
{\displaystyle\int_{0}^{h}}
{\displaystyle\int_{0}^{h}}
\dfrac{\mathbf{E}\left[  \mathbf{B}_{n}\left(  u\right)  \mathbf{B}_{n}\left(
v\right)  \right]  }{\overline{H}^{2}(u)\overline{H}^{2}\left(  v\right)
}d\overline{H}(u)d\overline{H}\left(  v\right)  \rightarrow2p,
\]%
\[
\dfrac{k}{n}%
{\displaystyle\int_{0}^{h}}
{\displaystyle\int_{0}^{h}}
\dfrac{\mathbf{E}\left[  \mathbf{B}_{n}^{\ast}\left(  u\right)  \mathbf{B}%
_{n}^{\ast}\left(  v\right)  \right]  }{\overline{H}^{2}(u)\overline{H}%
^{2}\left(  v\right)  }d\overline{H}^{\left(  1\right)  }(u)d\overline
{H}^{\left(  1\right)  }\left(  v\right)  \rightarrow2p^{2},
\]
and%
\[
\dfrac{k}{n}%
{\displaystyle\int_{0}^{h}}
{\displaystyle\int_{0}^{h}}
\dfrac{\mathbf{E}\left[  \mathbf{B}_{n}\left(  u\right)  \mathbf{B}_{n}^{\ast
}\left(  v\right)  \right]  }{\overline{H}^{2}(u)\overline{H}^{2}\left(
v\right)  }d\overline{H}(u)d\overline{H}^{\left(  1\right)  }\left(  v\right)
\rightarrow2p^{2},
\]
Using the results above with some further calculations leads to $\sigma^{2}%
.$\textbf{\hfill}$\mathbf{\Box}$

\section{\textbf{Appendix\label{sec5}}}

\noindent The following proposition consists in Corollary 2.1 of \cite{CHM86}.

\begin{proposition}
\label{Prop0}There exists a probability space $(\Omega,\mathcal{A}%
,\mathbb{P)}$ with independent $\left(  0,1\right)  $-uniform rv's $U_{1},$
$U_{2},...$ and a sequence of Brownian bridges $\left\{  B_{i}(s);\text{
}0\leq s\leq1\right\}  $ $\left(  i=1,2,...\right)  $ such that, for every
$0<\lambda<\infty,$ we have as $n\rightarrow\infty$%
\[
\underset{\lambda/n\leq s\leq1}{\sup}\frac{n^{\zeta}\left\vert \alpha
_{n}(s)-B_{n}(s)\right\vert }{s^{1/2-\zeta}}=\left\{
\begin{array}
[c]{ll}%
O_{\mathbb{P}}(\log n) & \text{when }\zeta=\dfrac{1}{4},\\
O_{\mathbb{P}}(1) & \text{when }0\leq\zeta<\dfrac{1}{4},
\end{array}
\right.
\]%
\[
\underset{0\leq s\leq1-\lambda/n}{\sup}\frac{n^{\zeta}\left\vert \alpha
_{n}(s)-B_{n}(s)\right\vert }{(1-s)^{1/2-\zeta}}=\left\{
\begin{array}
[c]{ll}%
O_{\mathbb{P}}(\log n) & \text{when }\zeta=\dfrac{1}{4},\\
O_{\mathbb{P}}(1) & \text{when }0\leq\zeta<\dfrac{1}{4},
\end{array}
\right.
\]
and%
\[
\underset{\lambda/n\leq s\leq1-\lambda/n}{\sup}\frac{n^{\zeta}\left\vert
\alpha_{n}(s)-B_{n}(s)\right\vert }{\left(  s(1-s)\right)  ^{1/2-\zeta}%
}=\left\{
\begin{array}
[c]{ll}%
O_{\mathbb{P}}(\log n) & \text{when }\zeta=\dfrac{1}{4},\\
O_{\mathbb{P}}(1) & \text{when }0\leq\zeta<\dfrac{1}{4}.
\end{array}
\right.
\]

\end{proposition}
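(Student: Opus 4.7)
The plan is to prove Proposition~\ref{Prop0} by first invoking the Koml\'os--Major--Tusn\'ady (KMT) strong approximation of the uniform empirical process and then upgrading it to the three claimed weighted forms via a dyadic decomposition near each endpoint.

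First, I would use KMT to construct on a suitable probability space $(\Omega,\mathcal{A},\mathbb{P})$ independent $(0,1)$-uniform rv's $U_{1},U_{2},\ldots$ and a sequence of Brownian bridges $B_{n}$ for which $\sup_{s\in[0,1]}|\alpha_{n}(s)-B_{n}(s)|=O_{\mathbb{P}}(n^{-1/2}\log n)$. On any subinterval of $[0,1]$ on which the denominator $s^{1/2-\zeta}$, $(1-s)^{1/2-\zeta}$ or $(s(1-s))^{1/2-\zeta}$ is bounded away from $0$ --- for example $[1/2,1]$ in the first display --- this unweighted bound immediately produces an error of order $n^{\zeta-1/2}\log n$, which is $O_{\mathbb{P}}(\log n)$ at $\zeta=1/4$ and $o_{\mathbb{P}}(1)$ for $\zeta<1/4$, as required.

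Second, I would treat the boundary region $[\lambda/n,1/2]$ via dyadic blocks $I_{k}=[2^{k}\lambda/n,2^{k+1}\lambda/n]$, $k=0,1,\ldots,k_{n}$, with $k_{n}\asymp\log_{2}(n/\lambda)$. On each $I_{k}$ the denominator is comparable to $s_{k}^{1/2-\zeta}$, where $s_{k}=2^{k}\lambda/n$, so the claim reduces to showing that $\sup_{s\in I_{k}}n^{\zeta}|\alpha_{n}(s)-B_{n}(s)|\leq C\,s_{k}^{1/2-\zeta}L(n)$ uniformly in $k$, with $L(n)=\log n$ when $\zeta=1/4$ and $L(n)=1$ when $\zeta<1/4$. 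For the empirical part I would chain on a fine subgrid of $I_{k}$ using Bernstein's exponential inequality for the increments of $\alpha_{n}$ (whose variance is controlled by $|u-v|$), and for the Gaussian part I would use the reflection principle to bound $\sup_{s\in I_{k}}|B_{n}(s)|$ by a quantity of the same order. The key input is the refined KMT inequality in which the approximation error on $[0,s]$ scales like $\sqrt{s}$ rather than uniformly; carrying this $\sqrt{s}$-dependence through the chaining yields the desired $s_{k}^{1/2-\zeta}$ factor. Summing the exponential tail bounds over the $O(\log n)$ dyadic blocks and the chaining levels then controls the overall supremum.

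The second display is obtained from the first by the symmetry $s\mapsto 1-s$, under which both $\alpha_{n}$ and $B_{n}$ are distributionally invariant, and the third display follows by splitting $[\lambda/n,1-\lambda/n]$ at $s=1/2$ and combining the two one-sided estimates. The main obstacle is the extreme dyadic scale $k=0$, where $s\asymp 1/n$: dividing the unweighted KMT error $n^{-1/2}\log n$ by the weight $n^{-(1/2-\zeta)}$ produces exactly $n^{\zeta}\log n$, which is why the $\log n$ factor is intrinsic at $\zeta=1/4$ and can only be removed in the strict range $\zeta<1/4$. Preserving the $\sqrt{s}$-dependence of the KMT error throughout the chaining, so that the $\log n$ loss appears only at the borderline $\zeta=1/4$, is the technical heart of the proof.
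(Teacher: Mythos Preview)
The paper does not give a proof of this proposition at all: it simply cites Corollary~2.1 of Cs\"org\H{o}, Cs\"org\H{o}, Horv\'ath and Mason (1986), page~48. So there is nothing in the paper to compare your argument against; the result is imported wholesale from the literature.

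On its own merits, your sketch has a circular step. The unweighted KMT bound $\sup_{s}|\alpha_{n}(s)-B_{n}(s)|=O_{\mathbb{P}}(n^{-1/2}\log n)$, plugged into the weighted ratio at the worst point $s=\lambda/n$, already yields $O_{\mathbb{P}}(\log n)$ for every $0\le\zeta\le 1/4$; so the $\zeta=1/4$ case is immediate and the only issue is removing the $\log n$ when $\zeta<1/4$. For that you invoke a ``refined KMT inequality in which the approximation error on $[0,s]$ scales like $\sqrt{s}$,'' but such a localised bound is \emph{not} a consequence of the classical KMT theorem; it is essentially the content of the weighted approximation you are trying to prove, and it is exactly what the CCHM (1986) construction was designed to deliver. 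Moreover, the chaining you describe bounds the increments of $\alpha_{n}$ (via Bernstein) and the size of $B_{n}$ (via reflection) \emph{separately}; separate control of the two processes cannot produce a useful bound on their difference, since each is individually of order $\sqrt{s}$ and $n^{\zeta}\sqrt{s}/s^{1/2-\zeta}=n^{\zeta}s^{\zeta}$ is unbounded. A genuine proof requires either the specific dyadic construction of CCHM (which couples the two processes block by block so that the \emph{difference} is small on small intervals) or an equivalent device; your outline does not supply this ingredient.
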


\begin{proof}
See \cite{CHM86}, page 48.
\end{proof}

\noindent In the next basic proposition, we provide an asymptotic
representation to the Kaplan-Meier product limit estimator in $Z_{n-k:n}.$
This result will be of prime importance in the study of the limiting behaviors
of many statistics based on censored data exhibiting extreme values.

\begin{proposition}
\label{Prop1}Assume that all second-order conditions $\left(
\ref{second-order}\right)  $ hold. Let $k=k_{n}$ be an integer sequence
satisfying, in addition to $\left(  \ref{k}\right)  $ $\sqrt{k}A_{j}\left(
h\right)  =O(1),$ for $j=1,2,$ as $n\rightarrow\infty$. Then there exists a
sequence of Brownian bridges $\left\{  B_{n}(s);\text{ }0\leq s\leq1\right\}
$ such that%
\[
\sqrt{k}\left\{  \frac{\overline{F}_{n}\left(  Z_{n-k:n}\right)  }%
{\overline{F}\left(  Z_{n-k:n}\right)  }-1\right\}  =\sqrt{\frac{n}{k}%
}\mathbf{B}_{n}\left(  h\right)  +\sqrt{\frac{k}{n}}\Delta_{n}+o_{\mathbb{P}%
}\left(  1\right)  ,
\]
where%
\begin{equation}
\Delta_{n}:=\int_{0}^{h}\frac{\mathbf{B}_{n}\left(  v\right)  }{\overline
{H}^{2}\left(  v\right)  }d\overline{H}\left(  v\right)  -\int_{0}^{h}%
\frac{\mathbf{B}_{n}^{\ast}\left(  v\right)  }{\overline{H}^{2}\left(
v\right)  }d\overline{H}^{\left(  1\right)  }\left(  v\right)  , \label{delta}%
\end{equation}
with $\mathbf{B}_{n}\left(  v\right)  $ and $\mathbf{B}_{n}^{\ast}\left(
v\right)  $ respectively defined in $\left(  \ref{B}\right)  $ and $\left(
\ref{Bn-etoil}\right)  .$ Consequently,%
\begin{equation}
\sqrt{k}\left\{  \frac{\overline{F}_{n}\left(  Z_{n-k:n}\right)  }%
{\overline{F}\left(  Z_{n-k:n}\right)  }-1\right\}  \overset{d}{\rightarrow
}\mathcal{N}\left(  0,p(1-p)\right)  ,\text{ as }n\rightarrow\infty,
\label{p(1-p)}%
\end{equation}

\end{proposition}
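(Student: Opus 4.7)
The plan is to linearize the Kaplan--Meier product-limit estimator through its cumulative hazard representation, and then to use the Gaussian approximations $(\ref{approx})$ and $(\ref{approx2})$ together with the almost-sure representations $(\ref{rep-H1})$ and $(\ref{rep-H0})$ to replace the two empirical subsurvival processes by their Gaussian counterparts $\mathbf{B}_n$ and $\mathbf{B}_n^{\ast}$.

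First I would linearize: the expansion $\log(1-x) = -x + O(x^{2})$ applied to each factor of $\overline{F}_{n}$ yields $-\log \overline{F}_n(t) = \Lambda_n(t) + o_{\mathbb{P}}(\cdot)$, where $\Lambda_n(t) := \int_0^t dH_n^{(1)}(u)/\overline{H}_n(u-)$ is the Nelson--Aalen estimator, and $\Lambda(t) = \int_0^t dH^{(1)}(u)/\overline{H}(u) = -\log \overline{F}(t)$ by continuity of $F$ and $G$. Combined with the ratio consistency $\overline{F}_n(Z_{n-k:n})/\overline{F}(Z_{n-k:n}) \overset{\mathbb{P}}{\to} 1$, this gives $\overline{F}_n/\overline{F} - 1 = \Lambda - \Lambda_n + o_{\mathbb{P}}(\cdot)$. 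I would then decompose
\[
\Lambda_n(t) - \Lambda(t) = \int_0^t \frac{d(H_n^{(1)} - H^{(1)})(u)}{\overline{H}(u)} + \int_0^t \left( \frac{1}{\overline{H}_n(u-)} - \frac{1}{\overline{H}(u)} \right) dH_n^{(1)}(u),
\]
approximate the second integrand by $(\overline{H}-\overline{H}_n)(u)/\overline{H}^2(u)$ with $dH_n^{(1)}$ replaced by $dH^{(1)}$, and apply integration by parts to the first piece to extract the boundary term $(\overline{H}_n^{(1)} - \overline{H}^{(1)})(t)/\overline{H}(t)$ together with a residual integral against $d\overline{H}/\overline{H}^{2}$.

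Assembling these manipulations, using $dH = -d\overline{H}$ and $dH^{(1)} = -d\overline{H}^{(1)}$, leads to the approximation
\[
\sqrt{n}\!\left(\tfrac{\overline{F}_n(t)}{\overline{F}(t)} - 1\right) \approx \frac{\sqrt{n}(\overline{H}_n^{(1)} - \overline{H}^{(1)})(t)}{\overline{H}(t)} + \int_0^t \frac{\sqrt{n}(\overline{H}_n^{(1)} - \overline{H}^{(1)})(u)}{\overline{H}^2(u)}\, d\overline{H}(u) - \int_0^t \frac{\sqrt{n}(\overline{H}_n - \overline{H})(u)}{\overline{H}^2(u)}\, d\overline{H}^{(1)}(u).
\]
Invoking $(\ref{rep-H1})$, $(\ref{rep-H0})$, $(\ref{approx})$ and $(\ref{approx2})$ uniformly on the relevant range, I would replace $\sqrt{n}(\overline{H}_n^{(1)} - \overline{H}^{(1)})$ by $\mathbf{B}_n$ and $\sqrt{n}(\overline{H}_n - \overline{H})$ by $\mathbf{B}_n^{\ast}$, obtaining the boundary term $\mathbf{B}_n(Z_{n-k:n})/\overline{H}(Z_{n-k:n})$ together with $\Delta_n$ evaluated at $Z_{n-k:n}$. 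Because $\overline{H}(Z_{n-k:n}) = (k/n)(1+o_{\mathbb{P}}(1))$ and $Z_{n-k:n}/h \overset{\mathbb{P}}{\to} 1$, the boundary term becomes $(n/k)\mathbf{B}_n(h)(1+o_{\mathbb{P}}(1))$ while the two integrals match $\Delta_n$ as defined in $(\ref{delta})$. Multiplying by $\sqrt{k/n}$ to convert the $\sqrt{n}$-normalization into $\sqrt{k}$ finally produces the asserted representation.

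The hardest step will be the uniform control of the Gaussian approximation error when integrated against the weight $1/\overline{H}^2(u)$, which blows up as $u$ approaches the upper endpoint of the support of $Z$; here one must exploit the sharp $n^{\zeta} s^{1/2-\zeta}$ rates from $(\ref{approx})$ and $(\ref{approx2})$ together with the regular variation of $\overline{H}$ and the intermediate conditions $k\to\infty$, $k/n\to 0$, $\sqrt{k}\,A_j(h)=O(1)$, to show that the approximation error is $o_{\mathbb{P}}(\sqrt{n/k})$ on $[0, Z_{n-k:n}]$. Replacing $dH_n^{(1)}$ by $dH^{(1)}$ in the decomposition, and controlling the Taylor remainder in $\log(1-\delta_{[i:n]}/(n-i+1))$ summed up over the $k$ upper order statistics, are the two other technical points that require care. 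For the asymptotic normality consequence $(\ref{p(1-p)})$, I would then compute $\mathbf{E}(\sqrt{n/k}\mathbf{B}_n(h) + \sqrt{k/n}\Delta_n)^{2}$ via the covariance formulas $(\ref{covariances})$; the limits $\overline{H}^{(1)}(h)/\overline{H}(h)\to p$ and $\overline{H}^{(0)}(h)/\overline{H}(h)\to 1-p$, combined with cancellations between the boundary and integral contributions, are expected to yield the variance $p(1-p)$.
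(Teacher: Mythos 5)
Your proposal is correct and follows essentially the same route as the paper: the same i.i.d.-type linearization of the Kaplan--Meier ratio (which the paper imports from Cs\"{o}rg\H{o} (1996), Proposition 5 and eq. (4.9), while you re-derive it via the Nelson--Aalen expansion of $-\log\overline{F}_n$), the same integration by parts producing the boundary term plus the two weighted integrals, the same substitution of $\mathbf{B}_n$ and $\mathbf{B}_n^{\ast}$ via $(\ref{rep-H1})$, $(\ref{rep-H0})$, $(\ref{approx})$, $(\ref{approx2})$, and the same covariance computation for $(\ref{p(1-p)})$. You also correctly isolate the two technical points the paper delegates to its Lemma 5.1 (negligibility of the integrals over $[h,Z_{n-k:n}]$ and of the increments $\mathbf{B}_n(Z_{n-k:n})-\mathbf{B}_n(h)$), so nothing essential is missing.
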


\begin{proof}
In view of Proposition 5 of \cite{C96}, combined with equation $\left(
4.9\right)  $ in the same reference, we have for any $x\leq Z_{n-k:n},$%
\begin{align*}
&  \frac{\overline{F}_{n}\left(  x\right)  -\overline{F}\left(  x\right)
}{\overline{F}\left(  x\right)  }\\
&  =\int_{0}^{x}\frac{d\left(  \overline{H}_{n}^{\left(  1\right)  }\left(
v\right)  -\overline{H}^{\left(  1\right)  }\left(  v\right)  \right)
}{\overline{H}\left(  v\right)  }-\int_{0}^{x}\frac{\overline{H}_{n}\left(
v\right)  -\overline{H}\left(  v\right)  }{\overline{H}^{2}\left(  v\right)
}d\overline{H}^{\left(  1\right)  }\left(  v\right)  +O_{\mathbb{P}}\left(
\frac{1}{k}\right)  .
\end{align*}
Upon integrating the first integral by parts, we get%
\begin{align}
&  \frac{\overline{F}_{n}\left(  x\right)  -\overline{F}\left(  x\right)
}{\overline{F}\left(  x\right)  }\label{ratio}\\
&  =-\left(  \overline{H}_{n}^{\left(  1\right)  }\left(  0\right)
-\overline{H}^{\left(  1\right)  }\left(  0\right)  \right)  +\frac
{\overline{H}_{n}^{\left(  1\right)  }\left(  x\right)  -\overline{H}^{\left(
1\right)  }\left(  x\right)  }{\overline{H}\left(  x\right)  }\nonumber\\
&  +\int_{0}^{x}\frac{\overline{H}_{n}^{\left(  1\right)  }\left(  v\right)
-\overline{H}^{\left(  1\right)  }\left(  v\right)  }{\overline{H}^{2}\left(
v\right)  }d\overline{H}\left(  v\right)  -\int_{0}^{x}\frac{\overline{H}%
_{n}\left(  v\right)  -\overline{H}\left(  v\right)  }{\overline{H}^{2}\left(
v\right)  }d\overline{H}^{\left(  1\right)  }\left(  v\right)  +O_{\mathbb{P}%
}\left(  \frac{1}{k}\right)  .\nonumber
\end{align}
Recall that%
\[
\sqrt{n}\left(  \overline{H}_{n}\left(  v\right)  -\overline{H}\left(
v\right)  \right)  =\sqrt{n}\left(  \overline{H}_{n}^{1}\left(  v\right)
-\overline{H}^{1}\left(  v\right)  \right)  +\sqrt{n}\left(  \overline{H}%
_{n}^{0}\left(  v\right)  -\overline{H}^{0}\left(  v\right)  \right)  ,
\]
which by representations $\left(  \ref{rep-H1}\right)  $ and $\left(
\ref{rep-H0}\right)  $ becomes%
\[
\sqrt{n}\left(  \overline{H}_{n}\left(  v\right)  -\overline{H}\left(
v\right)  \right)  =\left(  \alpha_{n}\left(  \theta\right)  -\alpha
_{n}\left(  \theta-\overline{H}^{\left(  1\right)  }\left(  v\right)  \right)
\right)  -\alpha_{n}\left(  1-\overline{H}^{\left(  0\right)  }\left(
v\right)  \right)  .
\]
On the other hand, by the classical central limit theorem, we have
$\overline{H}_{n}^{\left(  1\right)  }\left(  0\right)  -\overline{H}^{\left(
1\right)  }\left(  0\right)  =O_{\mathbb{P}}\left(  n^{-1/2}\right)  .$ Using
these results in $\left(  \ref{ratio}\right)  $ and then multiplying by
$\sqrt{k},$ we get%
\begin{align*}
&  \sqrt{k}\frac{\overline{F}_{n}\left(  x\right)  -\overline{F}\left(
x\right)  }{\overline{F}\left(  x\right)  }\\
&  =O_{\mathbb{P}}\left(  \sqrt{\frac{k}{n}}\right)  +O_{\mathbb{P}}\left(
\frac{1}{\sqrt{k}}\right)  +\sqrt{\frac{k}{n}}\frac{\alpha_{n}\left(
\theta\right)  -\alpha_{n}\left(  \theta-\overline{H}^{\left(  1\right)
}\left(  x\right)  \right)  }{\overline{H}\left(  x\right)  }\\
&  +\sqrt{\frac{k}{n}}\int_{0}^{x}\frac{\alpha_{n}\left(  \theta\right)
-\alpha_{n}\left(  \theta-\overline{H}^{\left(  1\right)  }\left(  v\right)
\right)  }{\overline{H}^{2}\left(  v\right)  }d\overline{H}\left(  v\right) \\
&  -\sqrt{\frac{k}{n}}\int_{0}^{x}\frac{\alpha_{n}\left(  \theta\right)
-\alpha_{n}\left(  \theta-\overline{H}^{\left(  1\right)  }\left(  v\right)
\right)  -\alpha_{n}\left(  1-\overline{H}^{\left(  0\right)  }\left(
v\right)  \right)  }{\overline{H}^{2}\left(  v\right)  }d\overline{H}^{\left(
1\right)  }\left(  v\right)  .
\end{align*}
The Gaussian approximations $\left(  \ref{approx}\right)  $ and $\left(
\ref{approx2}\right)  ,$ in $x=Z_{n-k:n},$ and the facts that $\sqrt{k/n}$ and
$1/\sqrt{k}$ tend to zero as $n\rightarrow\infty,$ lead to%
\begin{align*}
&  \sqrt{k}\frac{\overline{F}_{n}\left(  Z_{n-k:n}\right)  -\overline
{F}\left(  Z_{n-k:n}\right)  }{\overline{F}\left(  Z_{n-k:n}\right)  }\\
&  =\sqrt{\frac{n}{k}}\mathbf{B}_{n}\left(  Z_{n-k:n}\right)  +\sqrt{\frac
{k}{n}}\int_{0}^{Z_{n-k:n}}\frac{\mathbf{B}_{n}\left(  v\right)  }%
{\overline{H}^{2}\left(  v\right)  }d\overline{H}\left(  v\right)
-\sqrt{\frac{k}{n}}\int_{0}^{Z_{n-k:n}}\frac{\mathbf{B}_{n}^{\ast}\left(
v\right)  }{\overline{H}^{2}\left(  v\right)  }d\overline{H}^{\left(
1\right)  }\left(  v\right)  +o_{\mathbb{P}}\left(  1\right)  .
\end{align*}
Applying Lemma \ref{Lem1} completes the proof. The asymptotic normality
property is straightforward. For the variance computation, we use the
covariance formulas $\left(  \ref{covariances}\right)  $ and the results at
the end of Section \ref{sec4}.
\end{proof}

\begin{lemma}
\label{Lem1}Assume that the second-order conditions of regular variation
$\left(  \ref{second-order}\right)  $ and let $k:=k_{n}$ be an integer
sequence satisfying $\left(  \ref{k}\right)  $. Then%
\[%
\begin{tabular}
[c]{l}%
$\left(  i\right)  \text{ }\sqrt{\dfrac{k}{n}}%
{\displaystyle\int_{h}^{Z_{n-k:n}}}
\dfrac{\mathbf{B}_{n}\left(  v\right)  }{\overline{H}^{2}\left(  v\right)
}d\overline{H}\left(  v\right)  =o_{\mathbb{P}}\left(  1\right)  .\medskip$\\
$\left(  ii\right)  \text{ }\sqrt{\dfrac{k}{n}}%
{\displaystyle\int_{h}^{Z_{n-k:n}}}
\dfrac{\mathbf{B}_{n}^{\ast}\left(  v\right)  }{\overline{H}^{2}\left(
v\right)  }d\overline{H}^{\left(  1\right)  }\left(  v\right)  =o_{\mathbb{P}%
}\left(  1\right)  .\medskip$\\
$(iii)$ $\sqrt{\dfrac{n}{k}}\left\{  \mathbf{B}_{n}\left(  Z_{n-k:n}\right)
-\mathbf{B}_{n}\left(  h\right)  \right\}  =o_{\mathbb{P}}\left(  1\right)
\medskip$\\
$(iv)$ $\sqrt{\dfrac{n}{k}}\left\{  \mathbf{B}_{n}^{\ast}\left(
Z_{n-k:n}\right)  -\mathbf{B}_{n}^{\ast}\left(  h\right)  \right\}
=o_{\mathbb{P}}\left(  1\right)  .$%
\end{tabular}
\ \ \ \ \ \ \ \ \ \ \
\]

\end{lemma}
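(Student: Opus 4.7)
The plan is to treat the four parts in the order (iii), (iv), (i), (ii), because the Hölder-type increment bounds established in (iii) and (iv) will feed directly into (i) and (ii). The common engine behind every estimate is the rate $\overline{H}(Z_{n-k:n})/\overline{H}(h)=1+O_{\mathbb P}(k^{-1/2})$ coming from Theorem~2.1 of \cite{BMN15} (equivalently $Z_{n-k:n}/h=1+O_{\mathbb P}(k^{-1/2})$), combined with the regular variation of $\overline{H}$ and of $\overline{H}^{(1)}\sim p\,\overline{H}$ at infinity, and the Lévy modulus of continuity of the Brownian bridge.

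For part (iii), unfold the definition (\ref{B}) to rewrite the left-hand side as
\[
\sqrt{n/k}\bigl[B_n(\theta-\overline{H}^{(1)}(h))-B_n(\theta-\overline{H}^{(1)}(Z_{n-k:n}))\bigr].
\]
Regular variation of $\overline{H}^{(1)}$ (index $-1/\gamma$) applied to $Z_{n-k:n}/h=1+O_{\mathbb P}(k^{-1/2})$ yields $|\overline{H}^{(1)}(Z_{n-k:n})-\overline{H}^{(1)}(h)|=O_{\mathbb P}(\sqrt{k}/n)$. Since both arguments of $B_n$ lie in a shrinking neighbourhood of the interior point $\theta\in(0,1)$, a standard Hölder-$(1/2-\eta)$ modulus of continuity for the Brownian bridge bounds the increment by $O_{\mathbb P}((\sqrt{k}/n)^{1/2-\eta})$. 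Multiplying by $\sqrt{n/k}$ gives $O_{\mathbb P}(n^{\eta}k^{-1/4-\eta/2})$, which is $o_{\mathbb P}(1)$ provided $\eta$ is chosen small enough (possible under (\ref{k}) since $k/n^{\alpha}\to\infty$ for some $\alpha>0$ in any reasonable regime). Part (iv) follows by the same argument applied to $\mathbf{B}_n^{\ast}$, controlling the additional term $B_n(1-\overline{H}^{(0)}(\cdot))$ via regular variation of $\overline{H}^{(0)}$.

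For (i), decompose $\mathbf{B}_n(v)=\mathbf{B}_n(h)+(\mathbf{B}_n(v)-\mathbf{B}_n(h))$ inside the integral. The first piece contributes
\[
\sqrt{k/n}\;\mathbf{B}_n(h)\int_{h}^{Z_{n-k:n}}\!\!\frac{d\overline{H}(v)}{\overline{H}^{2}(v)}
=\sqrt{k/n}\;\mathbf{B}_n(h)\Bigl[\frac{1}{\overline{H}(h)}-\frac{1}{\overline{H}(Z_{n-k:n})}\Bigr].
\]
Using $\mathrm{Var}(\mathbf{B}_n(h))\sim p\,k/n$ (from (\ref{covariances})) so that $|\mathbf{B}_n(h)|=O_{\mathbb P}(\sqrt{k/n})$, and the bracket being $(n/k)\cdot O_{\mathbb P}(k^{-1/2})$, this piece is $O_{\mathbb P}(k^{-1/2})=o_{\mathbb P}(1)$. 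The second piece is dominated by
\[
\sqrt{k/n}\;\sup_{v\in[h\wedge Z_{n-k:n},\,h\vee Z_{n-k:n}]}|\mathbf{B}_n(v)-\mathbf{B}_n(h)|\;\cdot\;\Bigl|\frac{1}{\overline{H}(h)}-\frac{1}{\overline{H}(Z_{n-k:n})}\Bigr|.
\]
The supremum is handled by exactly the Hölder argument used in (iii), and combining the three factors yields a power of $k^{-1}$ times $n^{\eta}$ which is again $o_{\mathbb P}(1)$. Part (ii) is identical with $\mathbf{B}_n^{\ast}$ in place of $\mathbf{B}_n$ and $d\overline{H}^{(1)}$ in place of $d\overline{H}$, noting that $\overline{H}^{(1)}$ has the same regular variation index as $\overline{H}$, so the total-variation integral $\int_{h}^{Z_{n-k:n}}d\overline{H}^{(1)}/\overline{H}^{2}$ has the same order as in (i).

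The main obstacle is making the modulus-of-continuity bound interact cleanly with the random upper limit $Z_{n-k:n}$: one needs the supremum of the Brownian-bridge increment over a \emph{random} shrinking neighbourhood, so one must either localize to a deterministic event $\{|Z_{n-k:n}/h-1|\le C k^{-1/2}\}$ of high probability and then apply the deterministic Lévy modulus on that event, or invoke a uniform version of (\ref{approx2}) directly. Everything else is bookkeeping with the orders $k/n$, $1/\sqrt{k}$, and $\sqrt{k}/n$.
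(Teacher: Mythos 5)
Your overall strategy (localize to a high-probability event where $Z_{n-k:n}$ is close to $h$, then control a Gaussian increment over a small interval) matches the paper's, but the specific tool you invoke breaks the argument. You bound the Brownian-bridge increment over an interval of length $\delta_n\asymp\sqrt{k}/n$ by a uniform H\"older-$(1/2-\eta)$ modulus, $O_{\mathbb P}(\delta_n^{1/2-\eta})$, and after multiplying by $\sqrt{n/k}$ you land on $O_{\mathbb P}(n^{\eta}k^{-1/4-\eta/2})$. Under condition (\ref{k}) alone this need not tend to zero: take $k=\log n$, for which $n^{\eta}k^{-1/4-\eta/2}\to\infty$ for every $\eta>0$ (even the sharp L\'evy modulus $\sqrt{\delta\log(1/\delta)}$ gives $k^{-1/4}\sqrt{\log(n/\sqrt{k})}$, which still diverges for slowly growing $k$). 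Your parenthetical ``$k/n^{\alpha}\to\infty$ in any reasonable regime'' is precisely an extra hypothesis the lemma does not make. The same $n^{\eta}$ loss contaminates your second piece in (i)--(ii), which you bound by $O_{\mathbb P}(k^{-3/4-\eta/2}n^{\eta})$. The structural reason for the loss is that a modulus of continuity is uniform over \emph{all} subintervals of $[0,1]$ of length $\delta_n$, whereas here only one interval anchored at the deterministic point $\theta-\overline{H}^{(1)}(h)$ (resp. $\overline{H}(h)$) matters; for a single anchored interval the correct order is $O_{\mathbb P}(\sqrt{\delta_n})$ with no logarithmic or polynomial penalty.

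The paper's proof avoids this entirely. For (iii)--(iv) it writes $\mathbf{B}_n^{\ast}(v)\overset{d}{=}\mathcal{W}_n(\overline{H}(v))-\overline{H}(v)\mathcal{W}_n(1)$, restricts to the event $A_n(\epsilon)=\{|Z_{n-k:n}/h-1|\le\epsilon\}$, and applies Doob's maximal inequality to $\sup_{0\le t\le\epsilon\overline{H}(h)}|\mathcal{W}_n(t)|$, obtaining the bound $\eta^{-1}\sqrt{\epsilon}+\epsilon$ for the relevant probability --- arbitrarily small, with only the consistency $Z_{n-k:n}/h\overset{\mathbb P}{\to}1$ and no rate, and valid for any $k$ satisfying (\ref{k}). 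For (i)--(ii) it does not split off $\mathbf{B}_n(h)$ at all: it replaces the random upper limit by the deterministic point $(1+\varepsilon)h$ at the cost of $\mathbb P(|Z_{n-k:n}/h-1|>\varepsilon)$, then bounds the first absolute moment of the integral using $\mathbf{E}|\mathbf{B}_n(v)|\le\sqrt{\overline{H}^{(1)}(v)}$ and Karamata-type asymptotics, arriving at $p^{1/2}((1+\varepsilon)^{1/\gamma}-1)$, made small by choice of $\varepsilon$. To repair your proof, replace the H\"older modulus by the anchored-interval bound $\sup_{0\le t\le\delta}|\mathcal{W}_n(s_0+t)-\mathcal{W}_n(s_0)|=O_{\mathbb P}(\sqrt{\delta})$ (Doob or the reflection principle) applied on the localization event; with $\delta=\epsilon\,\overline{H}^{(1)}(h)\asymp\epsilon k/n$ this yields $\sqrt{n/k}\cdot O_{\mathbb P}(\sqrt{\epsilon k/n})=O_{\mathbb P}(\sqrt{\epsilon})$ and the conclusion follows under (\ref{k}) alone.
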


\begin{proof}
We begin by proving the first assertion. For fixed $0<\eta,$ $\varepsilon<1,$
we have%
\begin{align*}
&  \mathbb{P}\left(  \left\vert \sqrt{\frac{k}{n}}\int_{h}^{Z_{n-k:n}%
}\mathbf{B}_{n}\left(  v\right)  \frac{d\overline{H}\left(  v\right)
}{\overline{H}^{2}\left(  v\right)  }\right\vert >\eta\right) \\
&  \leq\mathbb{P}\left(  \left\vert \frac{Z_{n-k:n}}{h}-1\right\vert
>\varepsilon\right)  +\mathbb{P}\left(  \left\vert \sqrt{\frac{k}{n}}\int
_{h}^{\left(  1+\varepsilon\right)  h}\mathbf{B}_{n}\left(  v\right)
\frac{d\overline{H}\left(  v\right)  }{\overline{H}^{2}\left(  v\right)
}\right\vert >\eta\right)  .
\end{align*}
It is clear that the first term the right-hand side tends to zero as
$n\rightarrow\infty.$ Then, it remains to show that the second one goes to
zero as well.\ Indeed, observe that%
\[
\mathbf{E}\left\vert \sqrt{\frac{k}{n}}\int_{h}^{\left(  1+\varepsilon\right)
h}\mathbf{B}_{n}\left(  v\right)  \frac{d\overline{H}\left(  v\right)
}{\overline{H}^{2}\left(  v\right)  }\right\vert \leq-\sqrt{\frac{k}{n}}%
\int_{h}^{\left(  1+\varepsilon\right)  h}\mathbf{E}\left\vert \mathbf{B}%
_{n}\left(  v\right)  \right\vert \frac{d\overline{H}\left(  v\right)
}{\overline{H}^{2}\left(  v\right)  }.
\]
From the first result of $\left(  \ref{covariances}\right)  ,$ we have
$\mathbf{E}\left\vert \mathbf{B}_{n}\left(  v\right)  \right\vert \leq
\sqrt{\overline{H}^{1}\left(  v\right)  }.$ Then%
\[
\mathbf{E}\left\vert \sqrt{\frac{k}{n}}\int_{h}^{\left(  1+\varepsilon\right)
h}\mathbf{B}_{n}\left(  v\right)  \frac{d\overline{H}\left(  v\right)
}{\overline{H}^{2}\left(  v\right)  }\right\vert \leq-\sqrt{\frac{k}{n}}%
\int_{h}^{\left(  1+\varepsilon\right)  h}\sqrt{\overline{H}^{1}\left(
v\right)  }\frac{d\overline{H}\left(  v\right)  }{\overline{H}^{2}\left(
v\right)  },
\]
which, in turn, is less than or equal to%
\[
\sqrt{\frac{k}{n}}\sqrt{\overline{H}^{\left(  1\right)  }\left(  h\right)
}\left(  \frac{1}{\overline{H}\left(  \left(  1+\varepsilon\right)  h\right)
}-\frac{1}{\overline{H}\left(  h\right)  }\right)  .
\]
Since $\overline{H}\left(  h\right)  =k/n,$ then this may be rewritten into%
\[
\sqrt{\frac{\overline{H}^{\left(  1\right)  }\left(  h\right)  }{\overline
{H}\left(  h\right)  }}\left(  \frac{\overline{H}\left(  h\right)  }%
{\overline{H}\left(  \left(  1+\varepsilon\right)  h\right)  }-1\right)  .
\]
Since $\overline{H}^{\left(  1\right)  }\left(  h\right)  \sim p\overline
{H}\left(  h\right)  $ and $\overline{H}\in\mathcal{RV}_{\left(
-1/\gamma\right)  },$ then the previous quantity tends to $p^{1/2}\left(
\left(  1+\varepsilon\right)  ^{1/\gamma}-1\right)  $ as $n\rightarrow\infty.$
Being arbitrary, $\varepsilon$ may be chosen small enough so that this limit
be zero.$\ $By similar arguments, we also show assertion $\left(  ii\right)
,$ therefore we omit the details. The last two assertions are shown following
the same technique, that we use to prove $(iv).$ Notice that, from the
definition of $\mathbf{B}_{n}^{\ast}\left(  v\right)  $ and the second
covariance formula in $\left(  \ref{covariances}\right)  ,$
\[
\left\{  \mathbf{B}_{n}^{\ast}\left(  v\right)  ;\text{ }v\geq0\right\}
\overset{d}{=}\left\{  \mathcal{B}_{n}\left(  \overline{H}\left(  v\right)
\right)  ;\text{ }v\geq0\right\}  ,
\]
where $\left\{  \mathcal{B}_{n}\left(  s\right)  ;\text{ }0\leq s\leq
1\right\}  $ is a sequence of standard Brownian bridges. Hence%
\[
\sqrt{\dfrac{n}{k}}\left\{  \mathbf{B}_{n}^{\ast}\left(  Z_{n-k:n}\right)
-\mathbf{B}_{n}^{\ast}\left(  h\right)  \right\}  \overset{d}{=}\sqrt
{\dfrac{n}{k}}\left\{  \mathcal{B}_{n}\left(  \overline{H}\left(
Z_{n-k:n}\right)  \right)  -\mathcal{B}_{n}\left(  \overline{H}\left(
h\right)  \right)  \right\}  .
\]
Let $\left\{  \mathcal{W}_{n}\left(  t\right)  ;\text{ }0\leq s\leq1\right\}
$ be a sequence of standard Wiener processes such that $\mathcal{B}_{n}\left(
t\right)  =\mathcal{W}_{n}\left(  t\right)  -t\mathcal{W}_{n}\left(  1\right)
.$ Then $\sqrt{n/k}\left\{  \mathbf{B}_{n}^{\ast}\left(  Z_{n-k:n}\right)
-\mathbf{B}_{n}^{\ast}\left(  h\right)  \right\}  $ equals in distribution to%
\[
\sqrt{\dfrac{n}{k}}\left(  \left\{  \mathcal{W}_{n}\left(  \overline{H}\left(
Z_{n-k:n}\right)  \right)  -\mathcal{W}_{n}\left(  \overline{H}\left(
h\right)  \right)  \right\}  -\left\{  \overline{H}\left(  Z_{n-k:n}\right)
-\overline{H}\left(  h\right)  \right\}  \mathcal{W}_{n}\left(  1\right)
\right)  .
\]
By using the facts that $\overline{H}\left(  h\right)  =k/n$ and $\overline
{H}\left(  Z_{n-k:n}\right)  /\overline{H}\left(  h\right)  \approx1,$ we get%
\[
\sqrt{\dfrac{n}{k}}\left(  \overline{H}\left(  Z_{n-k:n}\right)  -\overline
{H}\left(  h\right)  \right)  =\sqrt{\dfrac{k}{n}}\left(  \frac{\overline
{H}\left(  Z_{n-k:n}\right)  }{\overline{H}\left(  h\right)  }-1\right)
=o_{\mathbb{P}}\left(  1\right)  .
\]
Next we show that
\[
\vartheta_{n}:=\sqrt{\dfrac{n}{k}}\left\{  \mathcal{W}_{n}\left(  \overline
{H}\left(  Z_{n-k:n}\right)  \right)  -\mathcal{W}_{n}\left(  \overline
{H}\left(  h\right)  \right)  \right\}  =o_{\mathbb{P}}\left(  1\right)  .
\]
Let $\eta>0$ be a fixed real number and show that $\mathbb{P}\left(
\left\vert \vartheta_{n}\right\vert >\eta\right)  \rightarrow0,$ as
$n\rightarrow\infty.$ Since $Z_{n-k:n}/h\overset{\mathbb{P}}{\rightarrow}1,$
then for an arbitrary $\epsilon>0$ and sufficiently large $n,$ the probability
of $A_{n}\left(  \epsilon\right)  :=\left\{  \left\vert Z_{n-k:n}%
/h-1\right\vert \leq\epsilon\right\}  $ is close to $1.$ Next, we will use the
following useful inequality: $\mathbb{P}\left(  \left\vert \vartheta
_{n}\right\vert >\eta\right)  \leq\mathbb{P}\left\{  \left\vert \vartheta
_{n}\right\vert >\eta,\text{ }A_{n}\left(  \epsilon\right)  \right\}
+\mathbb{P}\left\{  A_{n}^{c}\left(  \epsilon\right)  \right\}  ,$ where
$A_{n}^{c}\left(  \epsilon\right)  $ denotes the complement set of
$A_{n}\left(  \epsilon\right)  .$ It is easy to verify that $\vartheta_{n}$
may be rewritten into
\[
\frac{\mathcal{W}_{n}\left(  \overline{H}\left(  h\right)  \xi_{n}%
+\overline{H}\left(  h\right)  \right)  -\mathcal{W}_{n}\left(  \overline
{H}\left(  h\right)  \right)  }{\sqrt{\overline{H}\left(  h\right)  }},
\]
where $\xi_{n}:=\overline{H}\left(  Z_{n-k:n}\right)  /\overline{H}\left(
h\right)  -1.$ Since $\overline{H}$ is regularly varying, then we may show
readily that, in the set $A_{n}\left(  \epsilon\right)  ,$ we have $\left\vert
\xi_{n}\right\vert \leq\epsilon$ too, therefore%
\[
\mathbb{P}\left(  \left\vert \vartheta_{n}\right\vert >\eta\right)  \leq
I_{n}+\mathbb{P}\left\{  A_{n}^{c}\left(  \epsilon\right)  \right\}
+\mathbb{P}\left\{  A_{n}^{c}\left(  \epsilon\right)  \right\}  ,
\]
where%
\[
I_{n}:=\mathbb{P}\left(  \sup_{0\leq t\leq\overline{H}\left(  h\right)
\xi_{n}}\left\vert \mathcal{W}_{n}\left(  t+\overline{H}\left(  h\right)
\right)  -\mathcal{W}_{n}\left(  \overline{H}\left(  h\right)  \right)
\right\vert >\eta\sqrt{\overline{H}\left(  h\right)  },\text{ }A_{n}\left(
\epsilon\right)  \right)  .
\]
Note that, for a fixed $0\leq s\leq1,$ we have
\[
\left\{  \mathcal{W}_{n}\left(  t+s\right)  -\mathcal{W}_{n}\left(  s\right)
;\text{ }0\leq t\leq1-s\right\}  \overset{d}{=}\left\{  \mathcal{W}_{n}\left(
t\right)  ;\text{ }0\leq t\leq1-s\right\}  ,
\]
it follows that $I_{n}=\mathbb{P}\left(  \sup_{0\leq t\leq\epsilon\overline
{H}\left(  h\right)  }\left\vert \mathcal{W}_{n}\left(  t\right)  \right\vert
>\eta\sqrt{\overline{H}\left(  h\right)  }\right)  .$ Since $\mathcal{W}%
_{n}\left(  t\right)  $ is a martingale, then by applying Doob's maximal
inequalities, we write%
\[
\mathbb{P}\left(  \sup_{0\leq t\leq\epsilon\overline{H}\left(  h\right)
}\left\vert \mathcal{W}_{n}\left(  t\right)  \right\vert >\eta\sqrt
{\overline{H}\left(  h\right)  }\right)  \leq\frac{\mathbf{E}\left\vert
\mathcal{W}_{n}\left(  \epsilon\overline{H}\left(  h\right)  \right)
\right\vert }{\eta\sqrt{\overline{H}\left(  h\right)  }}.
\]
Since $\mathbf{E}\left\vert \mathcal{W}_{n}\left(  \epsilon\overline{H}\left(
h\right)  \right)  \right\vert \leq\sqrt{\epsilon\overline{H}\left(  h\right)
}$ and $\mathbb{P}\left\{  A_{n}^{c}\left(  \epsilon\right)  \right\}
<\epsilon,$ thus $\mathbb{P}\left(  \left\vert \vartheta_{n}\right\vert
>\eta\right)  \leq\eta^{-1}\epsilon^{1/2}+\epsilon$ which tends to zero as
$\epsilon\downarrow0,$ as sought.\medskip
\end{proof}

\end{document}